\newtheorem{thm}{Theorem}
\newtheorem{thmA}{Theorem}
\newtheorem{corA}[thmA]{Corollary}
\newtheorem{prop}[thm]{Proposition}
\newtheorem{lem}[thm]{Lemma}
\newtheorem{cor}[thm]{Corollary}
\newtheorem{ex}[thm]{Example}
\newcommand{\calC}{{\mathcal C}}
\newcommand{\calT}{{\mathcal T}}
\newcommand{\calX}{{\mathcal X}}
\newcommand{\calD}{{\mathcal D}}
\newcommand{\ad}{{\operatorname{ad}}}
\newcommand{\gr}{{\operatorname{gr}}}
\newcommand{\id}{{\operatorname{id}}}
\newcommand{\op}{{\operatorname{op}}}
\newcommand{\HH}{{\operatorname{HH}}}
\newcommand{\Hh}{{\operatorname{H}}}
\newcommand{\Ob}{\operatorname{Ob}}
\newcommand{\Mor}{\operatorname{Mor}}
\newcommand{\Der}{\operatorname{Der}}
\newcommand{\Arr}{\operatorname{Arr}}
\newcommand{\Ker}{\operatorname{Ker}}
\newcommand{\Ima}{\operatorname{Im}}
\newcommand{\Inn}{\operatorname{Inn}}
\newcommand{\Char}{\operatorname{Char}}
\newcommand{\End}{\operatorname{End}}
\newcommand{\Hom}{\operatorname{Hom}}
\newcommand{\im}{\operatorname{Im}}
\begin{document}
\title[Hochschild and simplicial cohomology of some finite category algebras]{Hochschild cohomology of some finite category algebras as simplicial cohomology}
\author{I.-I. Simion}
\author{C.-C. Todea}

\address[I.-I. Simion]
        { Department of Mathematics\\
          Babeș-Bolyai University\\
          Str. Ploieşti 23-25, Cluj-Napoca 400157, Romania\\
          and
          Department of Mathematics\\
          Technical University of Cluj-Napoca\\
          Str. G. Bari\c tiu 25, Cluj-Napoca 400027, Romania}
        \email{iulian.simion@ubbcluj.ro}
\address[C.-C. Todea]
        { Department of Mathematics\\
          Technical University of Cluj-Napoca\\
          Str. G. Bari\c tiu 25, Cluj-Napoca 400027, Romania}
        \email{Constantin.Todea@math.utcluj.ro}

\maketitle

\begin{abstract}
  By a result of Gerstenhaber and Schack the simplicial cohomology ring $\Hh^{\bullet}(\calC,k)$ of a poset $\calC$ is isomorphic to the Hochschild cohomology ring $\HH^{\bullet}(k\calC)$ of the category algebra $k\calC$, where the poset is viewed as a category and $k$ is a field. Extending results of Mishchenko \cite{Mishchenko}, under certain assumptions on a category $\calC$, we construct a category $\calD$ and a graded $k$-linear isomorphism $\HH^{\bullet}(k\calC)\cong \Hh^{\bullet}(\calD,k)$. Interpreting the degree one cohomology, we also show how  the $k$-space of derivations on $k\calC$, graded by some semigroup, corresponds to the $k$-space of characters on $\calD$.
  \end{abstract}

\section{Introduction}

Let $k$ be a field and let $\calC$ be a finite category, i.e. a category in which the objects form a finite set and every set of morphisms is finite. By a result of Gerstenhaber and Schack \cite{Gerstenhaber_Schack_83}, if $\calC$ is a poset, then the simplicial cohomology $\Hh^{\bullet}(\calC,k)$ is isomorphic to the Hochschild cohomology $\HH^{\bullet}(k\calC)$ of the category algebra $k\calC$. The proof of this result is simplified in \cite{Gerstenhaber_Schack_85} with the notion of $R$-relative Hochschild cohomology for subrings $R$ of $k\calC$. By a recent result of Mishchenko \cite[Theorem 1]{Mishchenko}, if $\calC$ is a group, then $\HH^{\bullet}(k\calC)$ is isomorphic to the  cohomology $\Hh^{\bullet}(BGr,k)$ of the classifying space of some groupoid $Gr$. The proof of this result describes an explicit $k$-linear isomorphism. In \cite{Xu_HHH} Fei Xu constructs a counterexample to a conjecture of Snashall and Solberg \cite{Sna_Sol} by studying  a split surjective ring homomorphism $\HH^{\bullet}(k\calC)\rightarrow\Hh^{\bullet}(B\calC,k)$. The natural question which arises is: for which categories can the Hochschild cohomology of $k\calC$ be identified explicitly with the singular cohomology of the classifying space of some category? Due to the well-known isomorphism $\Hh^{\bullet}(\calC,k)\cong\Hh^{\bullet} (B\calC,k)$ (see \cite[\S5]{Webb_representations_cohomology}) this is equivalent to asking for the existence of a category $\calD$ such that $\HH^{\bullet}(k\calC)$ is isomorphic to the simplicial cohomology $\Hh^{\bullet}(\calD,k)$.

In this paper we replace the category $F(\calC)$ used in \cite{Xu_HHH} with the \emph{adjoint category} $F^{\ad}(\calC)$, generalizing the explicit construction of the groupoid $Gr$ from \cite{Mishchenko}. We denote by $\Ob(\calC)$ the set of objects in $\calC$ and by $\Mor(\calC)$ the set of morphisms in $\calC.$ Let $x_1, x_2\in \Ob(\calC).$ The category $F^{\ad}(\calC)$ has as  objects the subset $\End\calC$ of endomorphisms in $\Mor(\calC)$ and morphisms are defined in the following way. A morphism in  $F^{\ad}(\calC)$ from $a\in \End_{\calC}(x_1)$ to $b\in \End_{\calC}(x_2)$ is a morphism $g\in\Hom_{\calC}(x_1,x_2)$, making the following diagram commutative:
\begin{equation}
  \label{ad_morphism}
  \begin{tikzcd}
    x_1 \arrow[r,"g"] & x_2 \\
    x_1 \arrow[u,"a"]\arrow[r,"g"] & x_2\arrow[u,swap,"b"]
  \end{tikzcd}.
\end{equation}
If $\calC$ is a group then $F^{\ad}(\calC)$ is isomorphic to the groupoid $Gr$ constructed in \cite{Mishchenko} and if $\calC$ is a poset then $F^{\ad}(\calC)$ is isomorphic to $\calC$. 

In order to extend the construction in \cite{Mishchenko} we require certain conditions on $\calC$. The category $\calC$ is called \emph{left deterministic} if for any $b\in\End_{\calC}(x_2)$ and $g\in\Hom_{\calC}(x_1,x_2)$ there exists 
$a\in\End_{\calC}(x_1)$ such that the diagram \eqref{ad_morphism} commutes. Similarly, the category $\calC$ is called \emph{right deterministic} if for any $a\in\End_{\calC}(x_1)$ and $g\in\Hom_{\calC}(x_1,x_2)$ there exists 
$b\in\End_{\calC}(x_2)$ such that the diagram \eqref{ad_morphism} commutes. We call $\calC$ \emph{deterministic} if it is both left and right deterministic. A category $\calC$ is left (respectively right) cancellative if the composition of morphisms has the left (respectively right) cancellation property. We call $\calC$ \emph{cancellative} if it is both left and right cancellative. The notion of cancellative categories needed for our results is not new. Lawson and Wallis use it to generalize some of their results \cite{LW_cancellative} from monoids to categories. 
There is a natural right action of  the monoid $\End_{\calC}(x_1)$ on the set $\Hom_{\calC}(x_1,x_2)$, given by composition:
$$
\Hom_{\calC}(x_1,x_2)\times \End_{\calC}(x_1)\rightarrow\Hom_{\calC}(x_1,x_2),\quad (g,a)\mapsto g\circ a.
$$
We call $\calC$ \emph{rr-transitive} (right-regular) if for any objects $x_1,x_2\in\Ob\calC$ the right action of  the monoid $\End_{\calC}(x_1)$ on the set $\Hom_{\calC}(x_1,x_2)$ is transitive, see \cite[Section 2.1]{Stein_monoid} for more details about actions of monoids on sets. 

We are now in a position to state our first result.
\begin{thmA}
  \label{HHH_epi}
  If $\calC$ is a finite deterministic cancellative category then there is a surjective morphism of graded $k$-vector spaces $\calT^{\bullet}:\HH^{\bullet}(k\calC)\rightarrow\Hh^{\bullet}(F^{\ad}(\calC),k)$ between the Hochschild cohomology of the category algebra $k\calC$ and the simplicial cohomology of the category $F^{\ad}(\calC)$. Moreover, if $\calC$ is rr-transitive, then $\calT^{\bullet}$ is an isomorphism.
\end{thmA}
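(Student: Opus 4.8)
The plan is to realize both cohomologies as the cohomology of explicit cochain complexes and to exhibit $\calT^{\bullet}$ as a coefficient-extraction map, together with an explicit section. First I would replace the bar complex computing $\HH^{\bullet}(k\calC)$ by the normalized Hochschild complex relative to the separable subalgebra $R=\bigoplus_{x\in\Ob\calC} k\,e_x$ spanned by the identities $e_x=\id_x$ (as in \cite{Gerstenhaber_Schack_85} and \cite{Xu_HHH}). In this model an $n$-cochain $f$ assigns to each chain of composable non-identity morphisms $x_0\xrightarrow{g_1}\cdots\xrightarrow{g_n}x_n$ an element $f(g_1,\dots,g_n)\in e_{x_n}(k\calC)e_{x_0}=k\Hom_{\calC}(x_0,x_n)$, and I write $g=g_n\circ\cdots\circ g_1$. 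On the other side, $\Hh^{\bullet}(F^{\ad}(\calC),k)$ is computed by the normalized simplicial cochain complex of the nerve of $F^{\ad}(\calC)$, whose $n$-simplices are chains $a_0\xrightarrow{g_1}a_1\to\cdots\xrightarrow{g_n}a_n$ with $g_ia_{i-1}=a_ig_i$. The first observation is that, since $\calC$ is deterministic and cancellative, such a simplex is uniquely determined by the underlying composable chain $(g_1,\dots,g_n)$ together with a single endomorphism $a_0\in\End_{\calC}(x_0)$: the remaining $a_i$ exist by (right) determinism and are unique by (right) cancellativity. Moreover the compatibility $g_ia_{i-1}=a_ig_i$ propagates to $g\,a_0=a_n\,g$, so to each simplex is attached a distinguished \emph{diagonal} morphism $g\,a_0\in\Hom_{\calC}(x_0,x_n)$.

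Next I would define $\calT^{n}(f)(a_0;g_1,\dots,g_n)$ to be the coefficient of the diagonal morphism $g\,a_0$ in the expansion of $f(g_1,\dots,g_n)$ in the basis $\Hom_{\calC}(x_0,x_n)$, and dually a candidate section by
\begin{equation*}
\calS^{n}(\phi)(g_1,\dots,g_n)=\sum_{a_0\in\End_{\calC}(x_0)}\phi(a_0;g_1,\dots,g_n)\,(g\,a_0).
\end{equation*}
Left cancellativity guarantees that the assignment $a_0\mapsto g\,a_0$ is injective, so the diagonals attached to distinct simplices over a fixed chain are distinct basis morphisms; hence $\calT^{n}\calS^{n}=\id$ on cochains. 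Finally, rr-transitivity is exactly the statement that the orbit $\{g\,a_0:a_0\in\End_{\calC}(x_0)\}$ exhausts $\Hom_{\calC}(x_0,x_n)$, so that under this hypothesis $\calS^{n}$ is bijective and $\calT^{n}$ is its inverse at the level of cochains.

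The main work, and the step I expect to be the real obstacle, is to verify that $\calT^{\bullet}$ (and likewise $\calS^{\bullet}$) is a morphism of complexes, i.e. that coefficient extraction of the diagonal intertwines the Hochschild differential with the simplicial coboundary. The inner terms are immediate: for $1\le i\le n$ the Hochschild term $f(\dots,g_{i+1}g_i,\dots)$ and the face $d_i$ both compose $g_i$ with $g_{i+1}$ and leave the base $a_0$ and the diagonal $g\,a_0$ unchanged, so they correspond with matching signs. The delicate terms are the outer ones. Here I would use the factorization $g\,a_0=(g_n\cdots g_2\,a_1)\circ g_1$, where $a_1$ is the base of the face $d_0$ determined by $g_1a_0=a_1g_1$: extracting the coefficient of $g\,a_0$ from the Hochschild term involving left multiplication by $g_1$ returns the coefficient of the $d_0$-diagonal in $f(g_2,\dots,g_{n+1})$, which is precisely the value of $\phi$ on the face $d_0$; the term with right multiplication by $g_{n+1}$ matches $d_{n+1}$ in the same way. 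It is exactly at this point that determinism (for the existence of the transformed base) and cancellativity (for its uniqueness, and for the injectivity making the diagonals a subset of a basis) are needed to make the outer faces of $F^{\ad}(\calC)$ well defined and compatible with multiplication in $k\calC$.

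With the cochain-map property in hand the conclusions follow formally. Since $\calS^{\bullet}$ is a cochain map and $\calT^{\bullet}\calS^{\bullet}=\id$, the map $\calS^{\bullet}$ splits $\calT^{\bullet}$, so $\calT^{\bullet}$ is a split surjection of complexes and therefore induces a surjection $\HH^{\bullet}(k\calC)\to\Hh^{\bullet}(F^{\ad}(\calC),k)$ on cohomology, proving the first assertion. When $\calC$ is in addition rr-transitive, $\calS^{\bullet}$ is bijective on cochains by the orbit argument above, hence $\calT^{\bullet}=(\calS^{\bullet})^{-1}$ is an isomorphism of cochain complexes and induces the claimed graded $k$-linear isomorphism $\HH^{\bullet}(k\calC)\cong\Hh^{\bullet}(F^{\ad}(\calC),k)$. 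As a consistency check one recovers Mishchenko's isomorphism when $\calC$ is a group (every such $\calC$ is deterministic, cancellative and rr-transitive) and the Gerstenhaber--Schack isomorphism when $\calC$ is a poset, where $F^{\ad}(\calC)\cong\calC$.
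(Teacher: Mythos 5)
Your proposal is correct and follows essentially the same route as the paper: the same diagonal coefficient-extraction map $\calT^{\bullet}$, the same section (your $\calS^{\bullet}$ is the paper's $\calX^{\bullet}$), surjectivity from $\calT^{\bullet}\calS^{\bullet}=\id$, and the isomorphism from the orbit argument $\Hom_{\calC}(x_0,x_n)=g\circ\End_{\calC}(x_0)$ on the Gerstenhaber--Schack relative complex (the paper's Corollary \ref{deterministic_cochains}). The only differences are organizational: the paper defines $\calT^{\bullet},\calX^{\bullet}$ on the full Hochschild complex, so its surjectivity statement does not pass through relative cohomology, and it must twist the simplicial differential by $(-1)^{m+1}$ because it feeds the Hochschild arguments in reversed order --- a sign issue your chain-order convention either avoids or would absorb by the same harmless twist.
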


The map $\calT^{\bullet}$ is induced from an explicitly constructed cochain map and unifies the constructions in \cite{Gerstenhaber_Schack_83} and \cite{Mishchenko} in our context. Moreover, it is easy to describe a right inverse of $\calT^{\bullet}$, again, induced from an explicit cochain map (Propositions \ref{complex_map_X} and \ref{prop_X_section}). Examples of finite deterministic cancellative rr-transitive categories are groupoids and posets (Subsection \ref{sec:cancellative}), hence we obtain the following corollary.

\begin{corA}
  \label{groupoids_posets}
  If $\calC$ is a finite groupoid or a finite poset then the Hochschild cohomology of the category algebra $k\calC$ is isomorphic to the simplicial cohomology of the category $F^{\ad}(\calC)$.
\end{corA}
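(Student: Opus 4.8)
The plan is to deduce the statement directly from Theorem~\ref{HHH_epi} by verifying that every finite groupoid and every finite poset is a deterministic, cancellative, rr-transitive category; once these hypotheses are confirmed, the ``moreover'' clause of Theorem~\ref{HHH_epi} supplies the desired isomorphism $\calT^{\bullet}\colon\HH^{\bullet}(k\calC)\xrightarrow{\sim}\Hh^{\bullet}(F^{\ad}(\calC),k)$. Thus the proof reduces entirely to checking the four properties (left determinism, right determinism, cancellativity, rr-transitivity) for the two classes of categories, with no additional cohomological input required.

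For a finite groupoid $\calC$ I would exploit invertibility of morphisms throughout. Cancellativity is immediate: if $g\circ a=g\circ a'$, composing with $g^{-1}$ on the left gives $a=a'$, and symmetrically on the right. For left determinism, given $b\in\End_{\calC}(x_2)$ and $g\in\Hom_{\calC}(x_1,x_2)$, I set $a:=g^{-1}\circ b\circ g\in\End_{\calC}(x_1)$, so that $g\circ a=b\circ g$ and \eqref{ad_morphism} commutes; right determinism is the mirror image, taking $b:=g\circ a\circ g^{-1}$. For rr-transitivity, if $\Hom_{\calC}(x_1,x_2)=\emptyset$ there is nothing to prove, and otherwise for any $g,g'\in\Hom_{\calC}(x_1,x_2)$ the element $a:=g^{-1}\circ g'\in\End_{\calC}(x_1)$ satisfies $g\circ a=g'$, so the orbit of any single morphism is the whole hom-set.

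For a finite poset $\calC$, viewed as a category with a unique morphism $x_1\to x_2$ whenever $x_1\le x_2$ and with $\End_{\calC}(x)=\{\id_x\}$ for every object $x$, all four properties are forced by the fact that each hom-set has at most one element. Cancellativity holds because two parallel morphisms are automatically equal. For determinism, both versions are witnessed by the identities: given $g\in\Hom_{\calC}(x_1,x_2)$, the choice $a=\id_{x_1}$ (respectively $b=\id_{x_2}$) makes \eqref{ad_morphism} commute, since $g\circ\id_{x_1}=\id_{x_2}\circ g=g$. Finally, rr-transitivity holds because the (trivial) action of $\End_{\calC}(x_1)$ is on a set of cardinality at most one, and any action on such a set is transitive.

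With all hypotheses verified, Theorem~\ref{HHH_epi} yields the graded $k$-linear isomorphism in both cases. I expect the only point requiring genuine care, as opposed to a routine check, to be fixing the direction of composition in \eqref{ad_morphism} correctly, so that the endomorphism produced in the groupoid case lands in the intended endomorphism monoid; everything else is formal. As a consistency check, the remarks preceding Theorem~\ref{HHH_epi}, namely that $F^{\ad}(\calC)\cong\calC$ for a poset and $F^{\ad}(\calC)\cong Gr$ for a group, confirm that the corollary specializes to the isomorphisms of Gerstenhaber--Schack and of Mishchenko, respectively.
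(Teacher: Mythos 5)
Your proposal is correct and follows the same route as the paper: the paper proves exactly these verifications in Propositions~\ref{star_groupoid} and~\ref{star_poset} (cancellativity via inverses for groupoids, via uniqueness of parallel morphisms for posets; determinism and rr-transitivity via conjugation $b=g\circ a\circ g^{-1}$, respectively via triviality of the endomorphism monoids) and then invokes Theorem~\ref{HHH_epi}. Your explicit witness $a=g^{-1}\circ g'$ for rr-transitivity of groupoids is a detail the paper leaves implicit, but the argument is otherwise identical.
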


The isomorphism $\HH^{\bullet}(k\calC)\cong\Hh^{\bullet}(\calC,k)$ for $\calC$ a poset does not hold for general $\calC$ (see \cite[\S5]{Webb_representations_cohomology} for more details). For certain EI categories (every endomorphism is an isomorphism) which are called \textit{amalgams of groups and posets}, see \cite[Section 4]{Lodder}, Lodder describes $\Hh^{\bullet}(\calC,k)$ as a direct summand of $\HH^{\bullet}(k\calC)$. 

In Example \ref{ex6} we describe an EI  category $\calC$,  for which the map  $\calT^{\bullet}$ is an isomorphism, but $\calC$ is not a groupoid neither a poset. The category $\calC$ is not even an amalgam of groups and posets.

The results in \cite{Mishchenko} build on ideas developed by Arutyunov and Mishchenko in \cite{Arutyunov_Mishchenko} where they show that $\HH^{1}(k\calC)\cong \Hh^{1}(BF^{\ad}(\calC),k)$ in the case where $\calC$ is a group. By Theorem \ref{HHH_epi}, there is an isomorphism of the first cohomology groups $\HH^{1}(k\calC)\cong \Hh^{1}(F^{\ad}(\calC),k)$. However, the method in \cite{Arutyunov_Mishchenko} relies on a description of the derivations of $k\calC$ in terms of characters on $F^{\ad}(\calC)$ (see Section \ref{sec:ders_chars}) and the fact that $ \Der(k\calC)/\Inn(k\calC)\cong \HH^{1}(k\calC),$  where $\Der(k\calC)$ is the algebra of derivations of $k\calC$ and $\Inn(k\calC)$ is the algebra of inner derivations. In order to extend the description in \cite{Arutyunov_Mishchenko}, we introduce in Subsection \ref{subsec:gradder} a semigroup with zero, denoted by $S$ and explain how $k\calC $ is graded by $S$. We denote by $\Der^{\gr}(k\calC)$ the $k$-vector space of all $S$-graded derivations. Our second result extends the characterization from \cite{Arutyunov_Mishchenko} of $\Der(k\calC)$ for group algebras as follows.

\begin{thmA}
  \label{HH1H1}
  Let $\calC$ be a finite rr-transitive deterministic cancellative category. There is a $k$-linear isomorphism $\calT:\Der^{\gr}(k\calC)\rightarrow \Char(F^{\ad}(\calC))$ between graded derivations on $k\calC$ and characters on the category $F^{\ad}(\calC)$.
  \end{thmA}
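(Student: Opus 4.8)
The plan is to prove the theorem by exhibiting two explicit, mutually inverse $k$-linear maps, rather than by appealing to the cohomological statement of Theorem~\ref{HHH_epi}: the latter only produces an isomorphism after passing to the quotients $\Der/\Inn$ and modulo coboundaries, whereas here we want a bijection at the level of graded derivations and characters themselves. Recall that an element of $\Char(F^{\ad}(\calC))$ is a scalar-valued function on the arrows of $F^{\ad}(\calC)$ that is additive under composition, and that, by the description of $\Der^{\gr}(k\calC)$ afforded by the $S$-grading, an $S$-graded derivation $d$ is determined by the coefficients it attaches to the basis morphisms of $k\calC$ along the ``adjoint'' directions recorded by $F^{\ad}(\calC)$. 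First I would make this precise: writing $d(\gamma)=\sum_{\delta}\lambda^{\gamma}_{\delta}\,\delta$ on the basis $\Mor(\calC)$, the condition that $d$ be homogeneous for the $S$-grading forces $\lambda^{\gamma}_{\delta}=0$ unless $\delta$ and $\gamma$ sit in a common commuting square of the form \eqref{ad_morphism}, so that the surviving coefficients are naturally indexed by the arrows of $F^{\ad}(\calC)$.

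Given this, I would define $\calT$ by setting $\calT(d)$ to be the function that sends an arrow $g\colon a\to b$ of $F^{\ad}(\calC)$ to the corresponding surviving coefficient of $d$. The map $\calT$ is visibly $k$-linear. To check that $\calT(d)$ is a character, I would take two composable arrows $g\colon a\to b$ and $h\colon b\to c$ of $F^{\ad}(\calC)$, whose composite as morphisms of $\calC$ is $h\circ g\colon a\to c$, and apply the Leibniz rule $d(h\circ g)=d(h)\circ g+h\circ d(g)$. Because $d$ is $S$-graded, only the two terms lying in the relevant homogeneous component contribute to the coefficient of the arrow $h\circ g$, and extracting that coefficient yields exactly $\calT(d)(h\circ g)=\calT(d)(h)+\calT(d)(g)$; the cancellation property of $\calC$ is what guarantees that these contributions are recorded on distinct basis elements with multiplicity one, so that no spurious terms appear.

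For the inverse, given a character $\chi$ I would define a $k$-linear map $d_{\chi}$ on the basis of $k\calC$ by the reverse recipe: on each basis morphism $\gamma$, sum the values of $\chi$ on the arrows of $F^{\ad}(\calC)$ attached to $\gamma$ through the commuting squares \eqref{ad_morphism}, weighted by the corresponding basis morphisms. Here the three hypotheses enter decisively: left and right determinism guarantee that the partner endomorphisms needed to close up each square actually exist; cancellativity guarantees that they are unique, so that $d_{\chi}$ is well defined; and rr-transitivity guarantees that every basis morphism is reached from an endomorphism by the monoid action, so that the coefficients of $d_{\chi}$ are fully and consistently determined. One then verifies directly that $d_{\chi}$ is $S$-graded and satisfies the Leibniz rule, the latter being a bookkeeping of commuting squares that mirrors, in reverse, the additivity argument above.

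Finally I would check that $\calT$ and $\chi\mapsto d_{\chi}$ are mutually inverse---immediate once both are written out on the basis---and conclude that $\calT$ is a $k$-linear isomorphism. I expect the main obstacle to be the well-definedness and the Leibniz verification of $d_{\chi}$: this is the step that genuinely consumes all of the determinism, cancellativity and rr-transitivity hypotheses, whereas the forward map $\calT$ and the character property of $\calT(d)$ use only the Leibniz rule together with the grading. As a consistency check, composing $\calT$ with the passage to Hochschild cohomology should reproduce the degree-one isomorphism $\HH^{1}(k\calC)\cong\Hh^{1}(F^{\ad}(\calC),k)$ implied by Theorem~\ref{HHH_epi}.
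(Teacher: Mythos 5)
Your proposal is correct and is essentially the paper's own proof: your coefficient-extraction map and your $\chi\mapsto d_\chi$ are precisely the restrictions of the cochain maps $\calT^1$ and $\calX^1$ from Section \ref{Mishchenko_general} to $\Der^{\gr}(k\calC)$ and $\Char(F^{\ad}(\calC))$, and your two verifications (Leibniz rule plus cancellative coefficient extraction yields additivity of $\calT(d)$; additivity of $\chi$ on composed commuting squares yields the Leibniz rule for $d_\chi$) are the same two computations the paper performs. The one point to repair is your opening claim that $S$-homogeneity alone forces the coefficients of a graded derivation to be supported on the diagonals $\gamma\circ a$ of squares of the form \eqref{ad_morphism}: this requires rr-transitivity and left cancellativity in addition to the grading (it is exactly Corollary \ref{deterministic_cochains}), and it is also the step where $d_{\calT(d)}=d$, hence injectivity of $\calT$, genuinely consumes the rr-transitivity hypothesis rather than only the well-definedness of $d_\chi$.
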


We note that the above results can be given for any small category $\calC$ if we use a condition of finiteness of supports for the cochains appearing in the above cohomologies. In this paper we assume that $\calC$ is finite. 

In \cite{Gerstenhaber_Schack_83}, Gerstenhaber and Shack show how the simplicial cohomology of a locally finite simplicial complex can be interpreted as the Hochschild cohomology of some associative unital $k$-algebra. It is known that the simplicial cohomology agrees with the singular cohomology (which is better adapted to theory than to computation) of topological spaces which can be triangulated. It is possible to compute simplicial cohomology of a simplicial complex efficiently, hence it is better suited for calculations. In Theorem \ref{HHH_epi} we describe certain finite categories which have Hochschild cohomology isomorphic (as graded $k$-vector space) to the simplicial cohomology of some other categories, with trivial coefficients. The Hochschild cohomology is endowed with a very rich structure: it is a graded commutative algebra via the cup product; it has a graded Lie bracket of degree $-1$; with this bracket the Hochschild cohomology becomes a Gerstenhaber algebra. Recently, the second author introduced a Beilinson-Drinfeld structure (BD-structure) in group cohomology \cite{To} and showed that the Hochschild cohomology $\HH^{\bullet}(\mathbb{F}_3C_3)$, of the cyclic $3$ group algebra, is a BD-algebra.  BD-algebras are very similar to BV-algebras (Batalin-Vilkovisky), which have been intensively studied for Hochschild cohomology of various algebras. It would be interesting to investigate the extent to which the maps $\calT^{\bullet},\calX^{\bullet}$ can transport (preserve) all these structures between Hochschild cohomology and simplicial cohomology.

\bigskip

The paper is structured as follows: In Section \ref{sec:preliminaries} we recall some facts on Hochschild cohomology and cohomology of the nerve of a small category. Subsection \ref{sec:cancellative} discusses the concepts of deterministic, cancellative  and rr-transitive categories which are needed in Section \ref{Mishchenko_general} to generalize the construction in \cite{Mishchenko}. The proof of Theorem \ref{HHH_epi} is given in Section \ref{sec:relative_HH}. In Section \ref{sec:ders_chars} we describe characters on the category $F^{\ad}(\calC)$ and give the proof of Theorem \ref{HH1H1}.

\section{Preliminaries}
\label{sec:preliminaries}

For an object $x$ in $\calC$ we denote by $\id_x$ the identity endomorphism on $x$. For a morphism $a:x\rightarrow y$ in $\calC$ we denote by $s(a)$ the source $x$ of $a$ and by $t(a)$ the target $y$ of $a$. For any category $\calC$ and a field $k$ the category algebra $k\calC$ is the  $k$-vector space with basis the morphisms in $\calC$ and with product defined by
$$
ab=
\left\{
\begin{array}{ll}
  a\circ b, &\text{ if $s(b)=t(a)$}\\
  0, &\text{ otherwise}
  \end{array}.
\right.
$$
We shall use the notation $\otimes$ for $\otimes_k$ and, if $A,B$ are $k$-vector vector spaces, by $\Hom_{k}(A,B)$ we denote the $k$-vector space of all $k$-linear maps from $A$ to $B$.
\subsection{Hochschild cohomology complex}
\label{hochschild_complex}
The Hochschild cochain complex \cite[Definition 1.1.13]{With_book} for the algebra $k\calC$ is
$$
0\xrightarrow{\mathmakebox[1.5em]{}}
C^{0}(k\calC)\xrightarrow{\mathmakebox[1.5em]{\partial^0}}
C^{1}(k\calC)\xrightarrow{\mathmakebox[1.5em]{\partial^1}}
\dots\xrightarrow{\mathmakebox[1.5em]{\partial^{m-1}}}
C^{m}(k\calC)\xrightarrow{\mathmakebox[1.5em]{\partial^m}}
C^{m+1}(k\calC)\xrightarrow{\mathmakebox[1.5em]{\partial^{m+1}}}
\dots
$$
where $C^{0}(k\calC)=k\calC$ and for $m\geq 1$
$$
C^{m}(k\calC)=\left\{f:\underbrace{k\calC\otimes k\calC\otimes \cdots\otimes k\calC}_{m\text{ times}}\rightarrow k\calC\,  \mid \text{ $f$ is $k$-linear}\right\}.
$$For any $f\in C^{m}(k\calC)$ and any $a_1,\dots,a_{m+1}\in k\calC$ we have 
 
 $$\partial^{m}(f)(a_1\otimes\dots\otimes a_{m+1})=$$
\begin{align*}
&a_1f(a_2\otimes \dots\otimes a_{m+1})
+\sum_{j=1}^{m}(-1)^{j}f(a_1\otimes \dots\otimes a_{j-1}\otimes a_{j}a_{j+1}\otimes a_{j+2}\otimes \dots\otimes a_{m+1})\\
&+(-1)^{m+1}f(a_1\otimes \dots\otimes a_{m})a_{m+1},
\end{align*}
if $m\geq 1$ and $\partial^0(f)(a)=af-fa$ for any $f\in C^{0}(k\calC$). It is well known that $\HH^0(k\calC)=\Ker(\partial^0)=Z(k\calC)$ and $\HH^m(\calC):=\Ker (\partial^m)/\Ima (\partial^{m-1})$ for $m\geq 1$.

Since $\calC$ is finite, any $k$-linear map $f\in C^{m}(k\calC)$ is represented by a matrix $$(f_{g_1,\dots,g_m}^h)_{g_1,\dots,g_m,h\in\Mor(\calC)}$$ with respect to the basis $\Mor(\calC)$ of $k\calC$:
$$
f(g_1\otimes \dots\otimes g_m)=\sum_{h\in\Mor(\calC)}f_{g_1,\dots,g_m}^hh
$$
for all $g_1,\dots,g_m\in\Mor(\calC)$. In particular, the differential operator $\partial^m$ applied to $f$ is determined by 
$$\partial^{m}(f)(g_1\otimes \dots\otimes g_{m+1})=$$
\begin{equation}
  \label{HH_basis}
  =\sum_{h\in\Mor(\calC)}\left(f_{g_2,\dots,g_{m+1}}^hg_1h
+\sum_{j=1}^{m}(-1)^{j}f_{g_1,\dots,g_{j}g_{j+1},\dots,g_{m+1}}^{h}h
+(-1)^{m+1}f_{g_1,\dots,g_{m}}^hhg_{m+1}\right).
\end{equation}

\subsection{Simplicial cohomology}
\label{complex_nerve}
The simplicial cohomology $\Hh^{\bullet}(\calC,k)$ is the cohomology $\Hh^{\bullet}(B\calC,k)$ of the nerve of $\calC$ (see \cite[\S5]{Webb_representations_cohomology}). The nerve of $\calC$ is the simplicial set
$$
N\calC=\left\{
\sigma:
a_0\xrightarrow{\mathmakebox[1.5em]{g_0}}
a_1\xrightarrow{\mathmakebox[1.5em]{g_1}}
\dots\xrightarrow{\mathmakebox[1.5em]{g_{m-2}}}
a_{m-1}\xrightarrow{\mathmakebox[1.5em]{g_{m-1}}}
a_m
\,
\mid
\text{ $\sigma$ is an $m$-chain of morphisms in $\calC$}\right\}.
$$
The subset of $m$-chains is denoted by $N\calC_{m}$ and the $0$-chains are $NC_0=\Ob(\calC)$. The simplicial homology with coefficients in $k$ is obtained with the simplicial chain complex
$$
\dots
\xrightarrow{\mathmakebox[1.5em]{\delta_{m+1}}}
kN\calC_{m+1}
\xrightarrow{\mathmakebox[1.5em]{\delta_{m}}}
kN\calC_{m}
\xrightarrow{\mathmakebox[1.5em]{\delta_{m-1}}}
\dots
\xrightarrow{\mathmakebox[1.5em]{\delta_{1}}}
kN\calC_{1}
\xrightarrow{\mathmakebox[1.5em]{\delta_{0}}}
kN\calC_{0}
\xrightarrow{\mathmakebox[1.5em]{}} 0
$$
where $kN\calC_{m}$ is the $k$-vector space with basis $N\calC_{m}$ and where
$$
\delta_m=\sum_{i=0}^{m+1}(-1)^{i}d_i^{m}
$$
with $d_i^{m}:N\calC_{m+1}\rightarrow N\calC_{m}$ is the $i$-th face map (extended by linearity), i.e. for $1\leq i\leq m$:
\begin{align*}
&d_i^{m}(
a_0\xrightarrow{\mathmakebox[1.5em]{g_0}}a_1\xrightarrow{\mathmakebox[1.5em]{g_1}}\dots
a_{i-1}\xrightarrow{\mathmakebox[1.5em]{g_{i-1}}}a_{i}\xrightarrow{\mathmakebox[1.5em]{g_{i}}}a_{i+1}\dots
\xrightarrow{\mathmakebox[1.5em]{g_{m-1}}}a_{m}\xrightarrow{\mathmakebox[1.5em]{g_{m}}}a_{m+1}
)\\
&=a_0\xrightarrow{\mathmakebox[1.5em]{g_0}}a_1\xrightarrow{\mathmakebox[1.5em]{g_1}}\dots
a_{i-1}\xrightarrow{\mathmakebox[5.3em]{g_{i-1}\circ g_{i}}}a_{i+1}\dots
\xrightarrow{\mathmakebox[1.5em]{g_{m-1}}}a_{m}\xrightarrow{\mathmakebox[1.5em]{g_{m}}}a_{m+1};
\end{align*}
\begin{align*}
d_0^{m}(
a_0\xrightarrow{\mathmakebox[1.5em]{g_0}}a_1\xrightarrow{\mathmakebox[1.5em]{g_1}}\dots
\xrightarrow{\mathmakebox[1.5em]{g_{m-1}}}a_{m}\xrightarrow{\mathmakebox[1.5em]{g_{m}}}a_{m+1}
)
&
=
a_1\xrightarrow{\mathmakebox[1.5em]{g_1}}\dots
\xrightarrow{\mathmakebox[1.5em]{g_{m-1}}}a_{m}\xrightarrow{\mathmakebox[1.5em]{g_{m}}}a_{m+1},
\\
d_{m+1}^{m}(
a_0\xrightarrow{\mathmakebox[1.5em]{g_0}}a_1\xrightarrow{\mathmakebox[1.5em]{g_1}}\dots
\xrightarrow{\mathmakebox[1.5em]{g_{m-1}}}a_{m}\xrightarrow{\mathmakebox[1.5em]{g_{m}}}a_{m+1}
)
&
=
a_0\xrightarrow{\mathmakebox[1.5em]{g_0}}a_1\xrightarrow{\mathmakebox[1.5em]{g_1}}\dots
\xrightarrow{\mathmakebox[1.5em]{g_{m-1}}}a_{m}.
\end{align*}
One obtains the cochain complex for the calculation of the simplicial cohomology
$$
0\xrightarrow{\mathmakebox[1.5em]{}}
C^{0}(kN\calC,k)\xrightarrow{\mathmakebox[1.5em]{\delta^{0}}}
C^{1}(kN\calC,k)\xrightarrow{\mathmakebox[1.5em]{\delta^{1}}}
\dots\xrightarrow{\mathmakebox[1.5em]{\delta^{m-1}}}
C^{m}(kN\calC,k)\xrightarrow{\mathmakebox[1.5em]{\delta^{m}}}
C^{m+1}(kN\calC,k)\xrightarrow{\mathmakebox[1.5em]{\delta^{m+1}}}
\dots
$$
where $C^{m}(kN\calC,k)=\Hom_{k}(kN\calC_{m},k)$ and where $$\delta^{m}:\Hom_{k}(kN\calC_{m},k)\rightarrow \Hom_{k}(kN\calC_{m+1},k)$$ is given by $\delta^{m}(f)=f\circ\delta_{m}$. Explicitly, for $f\in \Hom_{k}(kN\calC_{m},k)$ and $$\sigma=
(a_0\xrightarrow{\mathmakebox[1.5em]{g_0}}
\dots
\xrightarrow{\mathmakebox[1.5em]{g_{m}}}
a_{m+1})
\in N\calC_{m+1}
$$
we have
$$
\delta^{m}(f)(
a_0\xrightarrow{\mathmakebox[1.5em]{g_0}}
a_1\xrightarrow{\mathmakebox[1.5em]{g_1}}
\dots\xrightarrow{\mathmakebox[1.5em]{g_{m-1}}}
a_{m}\xrightarrow{\mathmakebox[1.5em]{g_{m}}}
a_{m+1}
)
$$
$$
=f(a_1\xrightarrow{\mathmakebox[1.5em]{g_1}}\dots
\xrightarrow{\mathmakebox[1.5em]{g_{m}}}a_{m+1})
+
\sum_{i=1}^{m}(-1)^if(
a_0\xrightarrow{\mathmakebox[1.5em]{g_0}}
\dots
a_{i-1}\xrightarrow{\mathmakebox[1.5em]{g_{i}\circ g_{i-1}}}a_{i+1}\dots
\xrightarrow{\mathmakebox[1.5em]{g_{m}}}a_{m+1}
)
$$
$$
+
(-1)^{m+1}f(
a_0\xrightarrow{\mathmakebox[1.5em]{g_0}}
\dots
\xrightarrow{\mathmakebox[1.5em]{g_{m-1}}}a_{m}
).
$$

\subsection{Deterministic cancellative categories}
\label{sec:cancellative}
We begin with some properties of deterministic cancellative categories.
\begin{lem}
  \label{end_factorization}
  Let $x_1,x_2,x_3\in\Ob(\calC)$, $f,g\in\Hom_{\calC}(x_1,x_2)$ and $h\in\Hom_{\calC}(x_2,x_3)$.
  \begin{itemize}
 \item[(a)] If $\calC$ is a left deterministic and left cancellative category, then  there is unique $a\in\End_{\calC}(x_1)$ such that diagram (\ref{ad_morphism}) is commutative.
    
 \item[(b)] If $\calC$ is a right deterministic and right cancellative category, then  there is unique $b\in\End_{\calC}(x_2)$ such that diagram (\ref{ad_morphism}) is commutative.
 
 \item[(c)]
  Let $\calC$ be a deterministic cancellative category. For any $g\in\Mor_\calC(x_1,x_2)$ there is an isomorphism of monoids $\varphi_g:\End_{\calC}(x_1)\rightarrow \End_{\calC}(x_2)$ such that $g\circ a=\varphi_g(a)\circ g,$ with inverse $\varphi_g^{-1}$ satisfying $b\circ g=g\circ \varphi_g^{-1}(b)$ for any $a\in \End_{\calC}(x_1)$ and $b\in \End_{\calC}(x_2)$. Moreover we have $\varphi_{h\circ g}=\varphi_h\circ\varphi_g$.
 \end{itemize}
 \end{lem}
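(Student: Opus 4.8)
The plan is to read the commutativity of diagram \eqref{ad_morphism} as the single relation $g\circ a = b\circ g$ with $a\in\End_{\calC}(x_1)$ and $b\in\End_{\calC}(x_2)$, and to dispatch each clause by separating \emph{existence} (supplied by a determinism hypothesis) from \emph{uniqueness} (supplied by a cancellation hypothesis). For (a), left determinism yields an $a$ with $g\circ a = b\circ g$ for the given $b$; if $a'$ is another such endomorphism, then $g\circ a = b\circ g = g\circ a'$, and left cancellation of $g$ forces $a=a'$. Part (b) is dual: right determinism produces $b$ from the given $a$, and if $g\circ a = b\circ g = b'\circ g$, then right cancellation of $g$ gives $b=b'$.

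For (c), I would first use (b) to \emph{define} $\varphi_g\colon\End_{\calC}(x_1)\to\End_{\calC}(x_2)$ by letting $\varphi_g(a)$ be the unique $b$ with $g\circ a=\varphi_g(a)\circ g$; this is well defined precisely because $\calC$ is right deterministic and right cancellative. Dually, using (a), I would define $\psi_g\colon\End_{\calC}(x_2)\to\End_{\calC}(x_1)$ by letting $\psi_g(b)$ be the unique $a$ with $b\circ g = g\circ\psi_g(b)$. The asserted relation $b\circ g = g\circ\varphi_g^{-1}(b)$ then holds by construction once I identify $\psi_g=\varphi_g^{-1}$.

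That $\varphi_g$ is a monoid homomorphism is a short computation driven by associativity and the defining relation: from $g\circ\id_{x_1}=\id_{x_2}\circ g$ I get $\varphi_g(\id_{x_1})=\id_{x_2}$, and for $a_1,a_2\in\End_{\calC}(x_1)$ the chain $g\circ(a_1\circ a_2)=(\varphi_g(a_1)\circ g)\circ a_2 = \varphi_g(a_1)\circ(\varphi_g(a_2)\circ g)=(\varphi_g(a_1)\circ\varphi_g(a_2))\circ g$ together with right cancellation of $g$ yields $\varphi_g(a_1\circ a_2)=\varphi_g(a_1)\circ\varphi_g(a_2)$. To see $\psi_g$ is a two-sided inverse, I would substitute the defining relations into one another: for $a\in\End_{\calC}(x_1)$, writing $b=\varphi_g(a)$ gives $b\circ g=g\circ a$, so $\psi_g(b)=a$ by left cancellation; symmetrically $\varphi_g(\psi_g(b))=b$ by right cancellation. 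Hence $\varphi_g$ is a bijective monoid homomorphism, and since the inverse of such a map is automatically a homomorphism, $\varphi_g$ is an isomorphism of monoids.

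Finally, for $\varphi_{h\circ g}=\varphi_h\circ\varphi_g$ I would apply the defining relation twice: for $a\in\End_{\calC}(x_1)$, using $g\circ a=\varphi_g(a)\circ g$ and then $h\circ\varphi_g(a)=\varphi_h(\varphi_g(a))\circ h$, one computes $(h\circ g)\circ a = h\circ(\varphi_g(a)\circ g)=(\varphi_h(\varphi_g(a))\circ h)\circ g=\big(\varphi_h(\varphi_g(a))\big)\circ(h\circ g)$; since $\calC$ is right cancellative I may cancel $h\circ g$ to conclude $\varphi_{h\circ g}(a)=\varphi_h(\varphi_g(a))$. I do not anticipate a serious obstacle here: the only thing to watch is the bookkeeping of which hypothesis justifies existence versus uniqueness in each relation (determinism for the former, cancellation for the latter), and reading every composition in the order dictated by \eqref{ad_morphism}.
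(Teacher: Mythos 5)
Your proof is correct and follows essentially the same route as the paper's: define $\varphi_g(a)$ as the unique $b$ with $g\circ a=b\circ g$ (existence from determinism, uniqueness from cancellation), verify multiplicativity by the chain $g\circ(a_1\circ a_2)=\varphi_g(a_1)\circ\varphi_g(a_2)\circ g$ followed by right cancellation, and construct the inverse dually. The only difference is that you spell out the steps the paper dismisses as routine (preservation of identities, the two-sided inverse check, and the formula $\varphi_{h\circ g}=\varphi_h\circ\varphi_g$), and these verifications are all accurate.
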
 
\begin{proof}
  The claims a) and b) follow from the definition using the cancellative properties. For c), using statement b),  we define $\varphi_{g}(a)$ to be the unique element such that $g\circ a=\varphi_g(a)\circ g$. For two elements $a_1,a_2\in \End_{\calC}(x)$ we have
$$
\varphi_{g}(a_1\circ a_2)\circ g=g\circ(a_1\circ a_2)=(g\circ a_1)\circ a_2=\varphi_g(a_1)\circ g\circ a_2=\varphi_g(a_1)\circ\varphi(a_2)\circ g
$$
hence, since $\calC$ is right cancellative, we obtain $\varphi_{g}(a_1\circ a_2)=\varphi_g(a_1)\circ\varphi_g(a_2)$. Similarly we can construct the morphism of monoids $\varphi_g^{-1}:\End_{\calC}(x_2)\rightarrow \End_{\calC}(x_1)$. It is easy to verify that $\varphi_g^{-1}$ is an inverse of $\varphi_g$ and the last statement. 
  \end{proof}
   A left cancellative and rr-transitive category has the following characterization.
  \begin{prop}\label{prop2}
  Let $\calC$ be a left cancellative category.  A category $\calC$ is  rr-transitive if and only if for any $x_1,x_2\in\Ob(\calC)$ and any  $f,g\in\Hom_{\calC}(x_1,x_2)$ there is a unique $a\in\End_{\calC}(x_1)$ such that the following diagram commutes
\begin{equation}
  \label{triangle}
  \begin{tikzcd}
    x_1 \arrow[r,"g"] & x_2 \\
    x_1 \arrow[u,"a"]\arrow[ur,swap,"f"] 
  \end{tikzcd}. 
\end{equation}
In other words, for any $x_1,x_2\in\Ob(\calC), g\in\Hom_{\calC}(x_1,x_2)$  we have $\Hom_{\calC}(x_1,x_2)=g\circ \End_{\calC}(x_1)$.
  \end{prop}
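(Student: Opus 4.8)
The plan is to observe that the stated condition splits into an existence part and a uniqueness part, and that these correspond exactly to rr-transitivity and to left cancellativity, respectively. First I would unwind the definitions. By definition $\calC$ is rr-transitive precisely when, for all $x_1,x_2\in\Ob(\calC)$ and all $f,g\in\Hom_{\calC}(x_1,x_2)$, there exists $a\in\End_{\calC}(x_1)$ with $g\circ a=f$; and the commutativity of diagram \eqref{triangle} is literally the equation $g\circ a=f$. Hence the existence of $a$ asserted in the proposition is just a restatement of rr-transitivity. This also immediately yields the reformulation $\Hom_{\calC}(x_1,x_2)=g\circ\End_{\calC}(x_1)$: the inclusion $\supseteq$ is automatic since $g\circ a\in\Hom_{\calC}(x_1,x_2)$ for every $a\in\End_{\calC}(x_1)$, and the inclusion $\subseteq$ is exactly transitivity.

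For the forward implication I would upgrade existence to uniqueness using left cancellativity. Assuming $\calC$ is rr-transitive, fix $f,g\in\Hom_{\calC}(x_1,x_2)$ and choose $a\in\End_{\calC}(x_1)$ with $g\circ a=f$. If $a'\in\End_{\calC}(x_1)$ also satisfies $g\circ a'=f$, then $g\circ a=g\circ a'$, and cancelling the left factor $g$ gives $a=a'$. This is the same application of left cancellativity used to obtain uniqueness in Lemma \ref{end_factorization}(a).

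The converse is immediate: the hypothesis asserts in particular that for all $f,g\in\Hom_{\calC}(x_1,x_2)$ there exists $a\in\End_{\calC}(x_1)$ with $g\circ a=f$, which is precisely transitivity of the right action of $\End_{\calC}(x_1)$ on $\Hom_{\calC}(x_1,x_2)$, so $\calC$ is rr-transitive.

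I do not anticipate any genuine obstacle, as the argument is essentially definitional. The only point needing care is to keep the cancellation convention straight: uniqueness arises from cancelling the \emph{left} factor $g$ in $g\circ a=g\circ a'$, that is, from left cancellativity rather than right, consistently with the convention fixed in Lemma \ref{end_factorization}.
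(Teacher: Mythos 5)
Your proposal is correct; the paper itself states Proposition~\ref{prop2} without proof, treating it as immediate from the definitions, and your argument --- existence of $a$ is literally the transitivity of the right action of $\End_{\calC}(x_1)$ on $\Hom_{\calC}(x_1,x_2)$, while uniqueness is exactly left cancellation of $g$ in $g\circ a=g\circ a'$ --- is precisely the intended definitional reasoning, consistent with how cancellativity is used in Lemma~\ref{end_factorization}(a).
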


  The category $F^{\ad}(\calC)$ can be viewed as a subcategory of the arrow category $\Arr(\calC)$, the category with objects $\Mor(\calC)$ and morphisms $(f,g):a\rightarrow b$ (with $a,b\in \Mor (\calC)$) for each commutative diagram of the form
$$
  \begin{tikzcd}
x_1 \arrow[r,"g"] & x_2 \\
y_1 \arrow[u,"a"]\arrow[r,"f"] & y_2\arrow[u,swap,"b"]
  \end{tikzcd}.
  $$
  Notice that one obtains $\Arr(\calC)$ by reversing one arrow in the definition of the Quillen's category of factorizations $F(\calC)$ \cite{Quillen}. A main ingredient for the proof in \cite{Xu_HHH} is the construction of a split surjective ring homomorphism  $\HH^{\bullet}(k\calC)\rightarrow\Hh^{\bullet}(BF(\calC),k)$. In this paper we replace $F(\calC)$ with $F^{\ad}(\calC)$ generalizing the explicit construction of the groupoid $Gr$ from \cite{Mishchenko}.

\begin{prop} Let $\calC$ be a cancellative category. The category $\calC$ is right deterministic and rr-transitive if and only if for any $a\in\End_{\calC}(x_1)$ and $g,f\in\Hom_{\calC}(x_1,x_2)$ there exists a unique $b\in\End_{\calC}(x_2)$ such that the following diagram commutes.
\begin{equation}
  \label{comm_diag_end}
  \begin{tikzcd}
x_1 \arrow[r,"g"] & x_2 \\
x_1 \arrow[u,"a"]\arrow[r,"f"] & x_2\arrow[u,swap,"b"]
  \end{tikzcd}.
\end{equation}
Similarly, the category $\calC$ is left deterministic and rr-transitive if and only if for any $b\in\End_{\calC}(x_2)$ and $g,f\in\Hom_{\calC}(x_1,x_2)$ there exists a unique $a\in\End_{\calC}(x_2)$ such that the above diagram commutes.
\end{prop}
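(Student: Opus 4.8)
The plan is to prove the two stated equivalences separately, observing at the outset that the second (``left'') equivalence follows from the first by the duality $\calC\mapsto\calC^{\op}$: passing to the opposite category interchanges left and right determinism, interchanges the two cancellation properties, and turns diagram \eqref{comm_diag_end} upside down, swapping the roles of $a$ and $b$. So I would concentrate on the first equivalence. Throughout I read the commutativity of \eqref{comm_diag_end} as the single relation $g\circ a=b\circ f$ in $\calC$, and likewise \eqref{ad_morphism} as $g\circ a=b\circ g$.

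For the forward implication, assume $\calC$ is cancellative, right deterministic and rr-transitive, and fix $a\in\End_{\calC}(x_1)$ and $f,g\in\Hom_{\calC}(x_1,x_2)$. First I would produce a source-side factor: since $g\circ a\in\Hom_{\calC}(x_1,x_2)$ and $\calC$ is left cancellative, Proposition \ref{prop2} gives $\Hom_{\calC}(x_1,x_2)=f\circ\End_{\calC}(x_1)$, hence a unique $d\in\End_{\calC}(x_1)$ with $f\circ d=g\circ a$. Next I would transport this factor across $f$: by right determinism and right cancellation, Lemma \ref{end_factorization}(b) yields a unique $b\in\End_{\calC}(x_2)$ with $f\circ d=b\circ f$. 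Combining, $b\circ f=f\circ d=g\circ a$, giving existence; uniqueness is immediate, since $b_1\circ f=g\circ a=b_2\circ f$ forces $b_1=b_2$ by right cancellation of $f$.

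For the reverse implication, assume $\calC$ is cancellative and that the unique-$b$ property of \eqref{comm_diag_end} holds. Right determinism comes for free by specializing $f=g$: the property then produces, for $a\in\End_{\calC}(x_1)$ and $g\in\Hom_{\calC}(x_1,x_2)$, a (unique) $b\in\End_{\calC}(x_2)$ with $g\circ a=b\circ g$, which is exactly \eqref{ad_morphism}. To reach rr-transitivity, by Proposition \ref{prop2} it suffices to show that for all $f,g\in\Hom_{\calC}(x_1,x_2)$ there is $c\in\End_{\calC}(x_1)$ with $g\circ c=f$. Specializing the hypothesis at $a=\id_{x_1}$ gives a unique $b\in\End_{\calC}(x_2)$ with $g=b\circ f$; applying this again with $f,g$ interchanged yields $b'$ with $f=b'\circ g$, and then $g=b\circ b'\circ g$, $f=b'\circ b\circ f$ together with cancellation show $b'=b^{-1}$, so $b$ is a unit of $\End_{\calC}(x_2)$. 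Thus every $f$ is obtained from $g$ by postcomposition with a unit, i.e. $\Hom_{\calC}(x_1,x_2)=\End_{\calC}(x_2)\circ g$, and it remains to convert this \emph{target-side} factorization into the required \emph{source-side} one.

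This last conversion is where I expect the real work to lie. The natural bridge is the map $\varphi_g\colon\End_{\calC}(x_1)\to\End_{\calC}(x_2)$ of Lemma \ref{end_factorization}(c), characterized by $g\circ a=\varphi_g(a)\circ g$; note that, exactly as in the proof of that lemma, right determinism together with right cancellation already makes $\varphi_g$ a well-defined \emph{injective} monoid homomorphism even without left determinism. If $\varphi_g$ is \emph{surjective}, then any target-side unit $b$ is $\varphi_g(a)$ for some $a\in\End_{\calC}(x_1)$, whence $b\circ g=g\circ a$ and $f=g\circ a$, completing rr-transitivity via $\Hom_{\calC}(x_1,x_2)=\End_{\calC}(x_2)\circ g=\varphi_g(\End_{\calC}(x_1))\circ g=g\circ\End_{\calC}(x_1)$. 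The main obstacle is precisely to establish this surjectivity: a priori left- and right-regularity are distinct, so one must realize each morphism of the form $b\circ g$ back in the form $g\circ a$, and I would extract this from the \emph{uniqueness} clause of the hypothesis applied to $b\circ g$ together with both cancellation laws. Once $\varphi_g$ is seen to be an isomorphism the argument closes, and the dual computation settles the second equivalence.
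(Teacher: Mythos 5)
Your forward implication is correct and is essentially the paper's own argument (the paper factors $g=f\circ a'$ by rr-transitivity and then transports $a'\circ a$ across $f$ by right determinism; you factor $g\circ a=f\circ d$ directly, which is the same proof). The real problem is the reverse implication, and it sits exactly at the step you flag: the surjectivity of $\varphi_g$ cannot be extracted from the uniqueness clause, because the reverse implication of the first equivalence is in fact \emph{false}. Take $\calC$ with two objects, $\End_{\calC}(x_1)=\{\id_{x_1}\}$, $\End_{\calC}(x_2)=\{\id_{x_2},s\}$ with $s\circ s=\id_{x_2}$, $\Hom_{\calC}(x_1,x_2)=\{h_1,h_2\}$ with $s\circ h_1=h_2$, $s\circ h_2=h_1$, and $\Hom_{\calC}(x_2,x_1)=\emptyset$ (this is Example \ref{ex6} of the paper with the endomorphism $a$ deleted). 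This category is cancellative and right deterministic, and the unique-$b$ condition of diagram \eqref{comm_diag_end} holds for every pair of objects: for $(x_1,x_2)$ it says precisely that $\{h_1,h_2\}$ is a torsor under the left action of $\End_{\calC}(x_2)$, and the two endomorphism monoids are groups, so the condition holds there as well. Yet $h_1\circ\End_{\calC}(x_1)=\{h_1\}\neq\Hom_{\calC}(x_1,x_2)$, so $\calC$ is not rr-transitive, and correspondingly $\varphi_{h_1}\colon\{\id_{x_1}\}\rightarrow\End_{\calC}(x_2)$ is injective but not surjective. What the hypothesis with $a=\id_{x_1}$ actually yields (as your own unit computation shows) is simple transitivity of the \emph{left} action of $\End_{\calC}(x_2)$ on $\Hom_{\calC}(x_1,x_2)$; converting that into transitivity of the \emph{right} action of $\End_{\calC}(x_1)$ requires left determinism, which is not available. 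Note that the paper gives no proof of this direction either --- it says only ``the other direction also follows'' --- so you have not missed an argument; you have located an error in the statement.

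Your opening duality reduction fails for the same underlying reason: rr-transitivity is not self-dual. Passing to $\calC^{\op}$ does interchange the determinism and cancellation conditions as you say, but it also turns the right action of $\End_{\calC}(x_1)$ on $\Hom_{\calC}(x_1,x_2)$ into the left action of $\End_{\calC^{\op}}(x_1)$ on $\Hom_{\calC^{\op}}(x_2,x_1)$, so the op-dual of the first equivalence involves transitivity of the left $\End(x_2)$-action, not rr-transitivity. Indeed, the two equivalences in the statement cannot be dual to one another, because the second one is \emph{true}: its reverse direction is immediate, since taking $b=\id_{x_2}$ in the unique-$a$ condition produces exactly the factorization $f=g\circ a$ required for rr-transitivity, and taking $f=g$ gives left determinism; the forward direction follows as in your first paragraph with the roles of the two determinism conditions exchanged. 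The correct repair of the first equivalence is to replace ``rr-transitive'' by transitivity of the left action of $\End_{\calC}(x_2)$ on $\Hom_{\calC}(x_1,x_2)$; with that change both your argument and the duality argument go through.
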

\begin{proof}
  Choose $a\in\End_{\calC}(x_1)$ and $g,f\in\Hom_{\calC}(x_1,x_2)$. Since $\calC$ is rr-transitive and right cancellative, there is a unique $a'\in\End_{\calC}(x_1 )$ such that $g=f\circ a'$. Since $\calC$ is right deterministic and right cancellative, there is a unique $b$ such that $f\circ (a'\circ a)=b\circ f$. Thus we have a unique $b$ such that $g\circ a=b\circ f$. The other direction also follows. The second claim follows with a similar argument.
\end{proof}

\begin{prop}
  \label{star_groupoid}
  A  groupoid is cancellative, deterministic and rr-transitive.
\end{prop}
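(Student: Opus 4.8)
The plan is to reduce each of the three required properties to the single defining feature of a groupoid, namely that every morphism is an isomorphism, and then to write down explicit solutions of the relevant equations. Throughout, for $g\in\Hom_{\calC}(x_1,x_2)$ I write $g^{-1}\in\Hom_{\calC}(x_2,x_1)$ for its inverse, so that $g^{-1}\circ g=\id_{x_1}$ and $g\circ g^{-1}=\id_{x_2}$. For cancellativity, suppose $h\circ f=h\circ g$ for composable morphisms; composing on the left with $h^{-1}$ yields $f=g$, so $\calC$ is left cancellative, and the symmetric argument (composing on the right with $h^{-1}$) gives right cancellativity. Hence $\calC$ is cancellative.

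Next I would settle determinism. For the left deterministic property I am given $b\in\End_{\calC}(x_2)$ and $g\in\Hom_{\calC}(x_1,x_2)$ and must produce $a\in\End_{\calC}(x_1)$ making diagram \eqref{ad_morphism} commute, i.e. $g\circ a=b\circ g$. Setting $a:=g^{-1}\circ b\circ g$ works: it is a composite $x_1\to x_2\to x_2\to x_1$, hence lies in $\End_{\calC}(x_1)$, and $g\circ a=g\circ g^{-1}\circ b\circ g=b\circ g$. Symmetrically, for right determinism I take $b:=g\circ a\circ g^{-1}\in\End_{\calC}(x_2)$, which satisfies $b\circ g=g\circ a$; thus $\calC$ is deterministic, and these are exactly the conjugation maps $\varphi_g$ of Lemma \ref{end_factorization}(c). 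For rr-transitivity, fix $x_1,x_2\in\Ob(\calC)$; assuming $\Hom_{\calC}(x_1,x_2)$ is nonempty, pick $f,g\in\Hom_{\calC}(x_1,x_2)$ and put $a:=g^{-1}\circ f$. This is an endomorphism of $x_1$ (a composite $x_1\to x_2\to x_1$) with $g\circ a=g\circ g^{-1}\circ f=f$, so the right action of $\End_{\calC}(x_1)$ on $\Hom_{\calC}(x_1,x_2)$ is transitive.

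I expect no serious obstacle here: each assertion is an immediate consequence of invertibility, and the argument is essentially a matter of solving linear equations in a group-like setting. The only points that deserve an explicit line of justification are checking that the elements I construct, namely $g^{-1}\circ b\circ g$, $g\circ a\circ g^{-1}$ and $g^{-1}\circ f$, genuinely have the correct source and target to be endomorphisms of the intended object, and handling the vacuous case of an empty hom-set in the transitivity statement, where transitivity holds trivially.
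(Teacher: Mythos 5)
Your proposal is correct and follows essentially the same route as the paper: everything is reduced to invertibility, with cancellation obtained by composing with $h^{-1}$ and the deterministic property solved by the conjugation formulas $a=g^{-1}\circ b\circ g$ and $b=g\circ a\circ g^{-1}$. The only cosmetic difference is that you verify rr-transitivity directly from the definition of the right action (via $a:=g^{-1}\circ f$), whereas the paper deduces it together with determinism from the unique solvability of diagram \eqref{comm_diag_end}; your explicit treatment of sources, targets, and the empty hom-set case is slightly more careful than the paper's terse argument.
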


\begin{proof}
  Let $\calC$ be a groupoid. Since any morphism $f\in\Mor(\calC)$ has an inverse $f^{-1}\in\Mor(\calC)$, one easily checks that $\calC$ is cancellative. Consider the commutative diagram \eqref{comm_diag_end}. Since $g$ is invertible  the relation $g\circ a=b\circ g$ implies $b=g\circ a\circ g^{-1}$. A similar argument shows that $a$ is uniquely determined by $b$, hence $\calC$ is deterministic and rr-transitiv.
  \end{proof}

\begin{prop}
  \label{star_poset}
  A poset is cancellative, deterministic and rr-transitive.
\end{prop}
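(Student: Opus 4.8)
The plan is to reduce everything to the observation that a poset, regarded as a category $\calC$, is \emph{thin}: for $x_1,x_2\in\Ob(\calC)$ the set $\Hom_{\calC}(x_1,x_2)$ has at most one element, namely the unique morphism witnessing $x_1\le x_2$ when this relation holds. In particular, the reflexive relation $x\le x$ corresponds to the identity, so every endomorphism monoid is trivial, $\End_{\calC}(x)=\{\id_x\}$ for each object $x$. With this structural fact in hand the three assertions become formal, in direct parallel to the preceding proof of Proposition \ref{star_groupoid}.

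First I would treat cancellativity. If $f\circ h_1=f\circ h_2$ with $f\circ h_1$ defined, then $h_1$ and $h_2$ share the same source and the same target, so by thinness $h_1=h_2$; hence $\calC$ is left cancellative, and the same argument on the other side gives right cancellativity. Next, for the deterministic property, fix $g\in\Hom_{\calC}(x_1,x_2)$ and $b\in\End_{\calC}(x_2)$. Since $\End_{\calC}(x_2)=\{\id_{x_2}\}$ and $\End_{\calC}(x_1)=\{\id_{x_1}\}$, the only candidate is $a=\id_{x_1}$, and indeed $g\circ\id_{x_1}=g=\id_{x_2}\circ g$, so the square \eqref{ad_morphism} commutes; this gives left determinism, and the symmetric statement gives right determinism, so $\calC$ is deterministic. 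Finally, for rr-transitivity, the monoid $\End_{\calC}(x_1)=\{\id_{x_1}\}$ acts on $\Hom_{\calC}(x_1,x_2)$, a set with at most one element; such an action has a single orbit and is therefore transitive. Equivalently, by Proposition \ref{prop2} together with left cancellativity, for the forced equality $f=g\in\Hom_{\calC}(x_1,x_2)$ the element $a=\id_{x_1}$ is the unique endomorphism with $g=f\circ a$, realizing \eqref{triangle}.

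There is essentially no hard step here; the only point requiring a word of care is the degenerate case where $\Hom_{\calC}(x_1,x_2)$ is empty (that is, $x_1\not\le x_2$), for which the right action is vacuously transitive and diagram \eqref{ad_morphism} has no instances to check. I would therefore phrase the verification so that both the singleton and the empty hom-set are covered uniformly, which is the only nuance in an otherwise immediate argument.
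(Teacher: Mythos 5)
Your proof is correct and follows essentially the same route as the paper's: both rest on the thinness of a poset category (at most one morphism between any two objects, hence $\End_{\calC}(x)=\{\id_x\}$), which gives cancellativity by comparing sources and targets and forces $a=\id_{x_1}$, $b=\id_{x_2}$ in the relevant squares for the deterministic and rr-transitive properties. Your explicit treatment of the empty hom-set (vacuous transitivity of the action) is a small point of care that the paper leaves implicit, but it does not alter the argument.
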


\begin{proof}
  Let $(\calC,\leq)$ be a poset and  let $g,h,f$ be morphisms such that $g\circ h$ and $g\circ f$ are morphisms. Then $s(h)\geq t(h)=s(g)\geq t(g)$ and $s(f)\geq t(f)=s(g)\geq t(g)$. If $g\circ h=g\circ f$ then $s(g\circ h)=s(h)=s(f)=s(g\circ f)$, hence $f$ and $g$ are the unique morphism between $s(h)=s(f)$ and $s(g)$. Thus $\calC$ is left cancellative. A similar argument shows that $\calC$ is also right cancellative. Consider the commutative diagram \eqref{comm_diag_end}. Since the only endomorphisms in $\calC$ are the identity morphisms, $a=\id_{x_1}$ and necessarily $b=\id_{x_2}$. Hence $\calC$ is deterministic and rr-transitive.
  \end{proof}
  
Next we give the promised example of a category satisfying the assumptions needed in our  main results, but which is not a groupoid or a poset.

\begin{ex}\label{ex6} Let $\calC$ be the category with two objects (denoted $x_1,x_2$), and morphisms
  $$
  \begin{tikzcd}
    \arrow[out=120,in=60,loop,looseness=5, start anchor={[yshift=0ex]}, end anchor={[yshift=0ex]}]{}{\id_{x_{1}}}
    \arrow[out=120,in=60,loop,looseness=10, start anchor={[xshift=-.7ex,yshift=-.5ex]}, end anchor={[xshift=.7ex,yshift=-.5ex]}]{}[swap,yshift=1.3ex]{}{a} 
    \arrow[bend right=-10]{r}{\varphi}
    \arrow[bend right=20]{r}{\psi}
    x_{1}\bullet
    &
    \bullet x_2
    \arrow[out=120,in=60,loop,looseness=5, start anchor={[yshift=0ex]}, end anchor={[yshift=0ex]}]{}{\id_{x_2}}
    \arrow[out=120,in=60,loop,looseness=10, start anchor={[xshift=-.7ex,yshift=-.5ex]}, end anchor={[xshift=.7ex,yshift=-.5ex]}]{}[swap,yshift=1.3ex]{}{b}
  \end{tikzcd}
  $$
  given by $\Mor(\calC)=\{\id_{x_{1}},a,\id_{x_{2}},b,\varphi,\psi\}$ with the compositions of morphisms
  $$a\circ a=\id_{x_{1}},b\circ b=\id_{x_{2}},\varphi\circ a=\psi, \psi\circ a=\varphi, b\circ \varphi=\psi, b\circ \psi=\varphi$$
  
It is an easy exercise to verify that $\calC$ is deterministic, cancellative and rr-transitive finite category.
\end{ex}

\section{Graded Morphisms}
\label{Mishchenko_general}
\subsection{The graded morphism $\calT^{\bullet}$}
\label{epi_T}

For any $m\geq 0$ we define  $k$-linear maps
$$
\calT^m:C^{m}(k\calC)\rightarrow C^{m}(kNF^{\ad}(\calC),k)
$$
A $k$-linear map $X\in C^{m}(k\calC)$ is determined by its values on a basis of $(k\calC)^{\otimes m}$. If $m\geq 1$, for each $g_0,\dots,g_{m-1}\in\Mor(\calC)$ we have a unique decomposition
$$
X(g_{m-1}\otimes \dots\otimes g_{0})=\sum_{h\in\Mor(\calC)}X_{g_{m-1},\dots,g_{0}}^{h}h
$$
and we define $\calT^m(X)\in C^{m}(kNF^{\ad}(\calC),k)$ on the basis $NF^{\ad}(\calC)_m$ of $kNF^{\ad}(\calC)_m$ by
$$
\calT^m(X)\left(
\begin{tikzcd}
x_0 \arrow[r,"g_0"] & x_1 \arrow[r, "g_1"]& \cdots\arrow[r,"g_{m-2}"]  & x_{m-1}  \arrow[r,"g_{m-1}"] & x_m \\
x_0 \arrow[u,"a_0"]\arrow[r,"g_0"] & x_1\arrow[u,"a_1"]\arrow[r, "g_1"] & \cdots\arrow[r,"g_{m-2}"] & x_{m-1}\arrow[u,"a_{m-1}"]\arrow[r,"g_{m-1}"] & x_m \arrow[u,"a_m"]
  \end{tikzcd}
\right)
=X_{g_{m-1},\dots,g_{0}}^{g_{m-1}g_{m-2}\dots g_{2}g_{1}g_0a_0}.
$$
If $m=0$ then $C^{0}(k\calC)=k\calC$ and $X=\sum_{h\in\Mor(\calC)}X^{h}h$. In this case we let
$$
\calT^0(X)\left(
\begin{tikzcd}
x_0 \\
x_0 \arrow[u,"a_0"]
  \end{tikzcd}
\right)
=X^{a_0}.
$$
Extending $\calT^{m}(X)$ by linearity, we obtain a graded morphism $\calT^{\bullet}=(\calT^{m})_{m\geq 0}$ between the cochain complexes $C^{\bullet}(k\calC)$ and $C^{\bullet}(kNF^{\ad}(\calC),k)$.

\begin{prop}
  \label{complex_map}
  If $\calC$ is a finite cancellative category then $\calT^{m+1}\circ\partial^m=(-1)^{m+1}\delta^m\circ \calT^m$.
  \end{prop}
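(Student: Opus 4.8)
The plan is to verify the claimed cochain identity $\calT^{m+1}\circ\partial^m=(-1)^{m+1}\delta^m\circ\calT^m$ by evaluating both sides on an arbitrary basis element of $kNF^{\ad}(\calC)_{m+1}$, namely a commutative ladder
$$
\begin{tikzcd}
x_0 \arrow[r,"g_0"] & x_1 \arrow[r,"g_1"] & \cdots \arrow[r,"g_m"] & x_{m+1} \\
x_0 \arrow[u,"a_0"]\arrow[r,"g_0"] & x_1 \arrow[u,"a_1"]\arrow[r,"g_1"] & \cdots \arrow[r,"g_m"] & x_{m+1} \arrow[u,"a_{m+1}"]
\end{tikzcd},
$$
and comparing the resulting scalar expressions term by term. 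First I would expand $\delta^m\circ\calT^m$ using the explicit simplicial coboundary formula of Subsection \ref{complex_nerve}: its three groups of terms are a leading face (drop $a_0,g_0$), an alternating sum of inner faces (compose $g_{i}\circ g_{i-1}$, which forces the middle vertical arrow $a_i$ to be discarded because the ladder must remain commutative), and a trailing face (drop $a_{m+1},g_m$). Each face is then fed into $\calT^m$, producing a matrix coefficient of $X$ with a specific superscript of the form $X^{\,(\text{product of }g\text{'s})\,(\text{remaining }a)}_{\dots}$.

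Next I would expand $\calT^{m+1}\circ\partial^m$ by applying $\calT^{m+1}$ to the Hochschild coboundary formula \eqref{HH_basis}. Here the subtlety is that $\calT^{m+1}$ records the coefficient of the single basis morphism $h=g_m g_{m-1}\cdots g_1 g_0 a_0$, so applying $\calT^{m+1}$ to each of the three parts of $\partial^m(X)$ picks out exactly the coefficient of that morphism in the relevant product. The leading term $g_0\,X(\cdots)$ becomes a coefficient with the outermost $g_0$ stripped; the trailing term $X(\cdots)g_{m+1}$ — after reindexing to match the tensor-reversal convention $X(g_{m-1}\otimes\cdots\otimes g_0)$ used in the definition of $\calT^m$ — becomes the coefficient corresponding to post-composition by the last arrow; and the inner sum produces composed-arrow terms.

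The heart of the argument is a careful bookkeeping: I must show that each term from the Hochschild side matches, up to the overall sign $(-1)^{m+1}$, exactly one term from the simplicial side, and that the signs align after accounting for the index-reversal in $\calT^m$. The key structural input is Lemma \ref{end_factorization}(c): cancellativity guarantees that the superscript $g_m\cdots g_1 g_0 a_0$ uniquely determines, and is uniquely determined by, the data of the ladder, so that the single coefficient extracted by $\calT^{m+1}$ really does correspond to the right face of the nerve. Concretely, the relations $g_i\circ a_i=a_{i+1}\circ g_i$ (the commutativity of each square) let me rewrite products such as $g_m\cdots g_1 g_0 a_0$ by sliding the endomorphisms across the $g$'s via $\varphi_{g_i}$, which is precisely what makes the faces of the ladder compatible with the multiplication in $k\calC$.

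The main obstacle I anticipate is the sign and index matching: the definition of $\calT^m$ reverses the order of the arguments $g_{m-1}\otimes\cdots\otimes g_0$, so the Hochschild faces (which act on $a_1,\dots,a_{m+1}$ left to right) and the simplicial faces (which act on $g_0,\dots,g_m$) are indexed in opposite directions. Tracking this reversal through the alternating signs is what produces the global factor $(-1)^{m+1}$ rather than $+1$, and getting every intermediate sign correct — especially the identification of the Hochschild leading term with the simplicial trailing term and vice versa — is the delicate part. I would handle this by fixing the reindexing $j\leftrightarrow m+1-i$ at the outset and checking the three boundary contributions separately before confirming the bulk telescoping sum.
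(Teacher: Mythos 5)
Your overall strategy is exactly the paper's: evaluate both sides on a basis ladder $\sigma$, read off $\calT^{m+1}(\partial^m X)(\sigma)$ as the coefficient of $h' = g_m\cdots g_1 g_0 a_0$ in $(\partial^m X)(g_m\otimes\cdots\otimes g_0)$ via \eqref{HH_basis}, and match the three groups of terms against the faces $d^m_0,\ldots,d^m_{m+1}$ appearing in $\delta^m$, with the index reversal accounting for the global sign $(-1)^{m+1}$. That part of your bookkeeping plan is exactly how the paper's proof concludes.

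However, the step you single out as "the key structural input" is wrong as stated, in a way that matters under the stated hypotheses. Lemma \ref{end_factorization}(c) --- the existence of the monoid isomorphisms $\varphi_g$ --- requires $\calC$ to be \emph{deterministic} and cancellative, whereas Proposition \ref{complex_map} assumes only cancellative; so the maps $\varphi_{g_i}$ you propose to slide endomorphisms with need not exist, and any argument routed through them is unjustified here. Fortunately this input is also unnecessary: the chain $\sigma$ already comes equipped with the endomorphisms $a_0,\ldots,a_{m+1}$ and the commutation relations $g_i\circ a_i = a_{i+1}\circ g_i$, so no existence statement is needed. What cancellativity is actually for --- and what your proposal leaves implicit --- is the \emph{uniqueness} of the morphism $h$ in the two boundary extractions: the coefficient of $h'$ in $g_m\, X(g_{m-1}\otimes\cdots\otimes g_0)$ is a sum of $X^{h}_{g_{m-1},\ldots,g_0}$ over all $h$ with $g_m h = h'$, and left cancellation guarantees there is exactly one such $h$, namely $g_{m-1}\cdots g_0 a_0$; dually, for $X(g_m\otimes\cdots\otimes g_1)\,g_0$ one must solve $h g_0 = h'$, whose solution $h = g_m\cdots g_1 a_1$ exists by the commutativity of the first square of $\sigma$ and is unique by right cancellation. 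If you replace the appeal to Lemma \ref{end_factorization}(c) by these two uniqueness arguments, your plan coincides with the paper's proof. Two minor slips to correct as well: in an $(m+1)$-chain the arrows are $g_0,\ldots,g_m$, so there is no ``$g_{m+1}$''; and the inner terms do not telescope --- they match the inner faces $d^m_j$ one-to-one. Finally, the paper treats $m=0$ separately, which your plan should too, since $\calT^0$ is defined by a different formula.
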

\begin{proof}
  We follow the argument in \cite[\S3.1]{Mishchenko} and show that for any $X\in C^{m}(k\calC)$ we have
  $$
  \calT^{m+1}(\partial^mX)(\sigma)=(-1)^{m+1}\delta^m(\calT^mX)(\sigma)
  $$
for all $(m+1)$-chains
$$
\sigma=
\left(\begin{tikzcd}
x_0 \arrow[r,"g_0"] & x_1 \arrow[r, "g_1"]& \cdots\arrow[r,"g_{m-1}"]  & x_{m}  \arrow[r,"g_{m}"] & x_{m+1} \\
x_0 \arrow[u,"a_0"]\arrow[r,"g_0"] & x_1\arrow[u,"a_1"]\arrow[r, "g_1"] & \cdots\arrow[r,"g_{m-1}"] & x_{m}\arrow[u,"a_{m}"]\arrow[r,"g_{m}"] & x_{m+1} \arrow[u,"a_{m+1}"]
\end{tikzcd}\right).
$$
The left-hand side $\calT^{m+1}(\partial^mX)(\sigma)$ is the coefficient of $h'=g_{m}g_{m-1}\dots g_{1}g_{0}a_{0}$ in 
$(\partial^mX)(g_m\otimes \dots\otimes g_0)$ which by \eqref{HH_basis} equals
\begin{align*}
  \sum_{h\in\Mor(\calC)}\left(X_{g_{m-1},\dots,g_0}^{h}g_mh
+\sum_{j=0}^{m-1}(-1)^{m-j}X_{g_m,\dots,g_{j+2},g_{j+1}g_{j},g_{j-1},\dots,g_{0}}^{h}h
+ (-1)^{m+1}X^{h}_{g_m,\dots,g_{1}}hg_0\right).
\end{align*}
Since $\Mor(\calC)$ is a basis for $k\calC$, the coefficient of $h'$ is
$$
  \sum_{h\in\Mor(\calC)}^{g_mh=h'}X_{g_{m-1},\dots,g_{0}}^{h}
+ \sum_{j=0}^{m-1}(-1)^{m-j}X_{g_m,\dots,g_{j+2},g_{j+1}g_{j},g_{j-1},\dots,g_{0}}^{h'}
+ (-1)^{m+1}\sum_{h\in\Mor(\calC)}^{hg_0=h'}X^{h}_{g_m,\dots,g_{1}}.
$$
From the commutativity of the diagram $\sigma$, the equation $hg_0=h'$ admits the solution $h_1=g_m\dots g_1a_1$. If $h_2$ is another solution to $hg_0=h'$ then $h_1g_0=h_2g_0$ and since $\calC$ is right cancellative, we have $h_1=h_2$. For the equation $g_mh=h'$, since $\calC$ is left cancellative, we have the unique solution $h=g_{m-1}\dots g_{0}a_{0}$. Hence $\calT^{m+1}(\partial^mX)(\sigma)$ equals
$$
\underbrace{X_{g_{m-1},\dots,g_{0}}^{g_{m-1}\dots g_{0}a_{0}}}_{\calT^m(X)\circ d^m_{m+1}(\sigma)}
+
\underbrace{\sum_{j=1}^{m}(-1)^{m-j+1}X_{g_{m},\dots,g_{j+1},g_{j}g_{j-1},g_{j-2}\dots,g_{0}}^{g_{m} g_{m-1} \dots g_{1}g_{0}a_0}}_{(-1)^{m+1}\sum_{j=1}^{m}(-1)^{j}\calT^m(X)\circ d^m_{j}(\sigma)}
+
(-1)^{m+1}
\underbrace{X^{g_{m}\dots g_{1}a_1}_{g_{m},\dots,g_{1}}}_{\calT^m(X)\circ d^m_{0}(\sigma)}.
$$
From the description of $\delta^{m}$ in Subsection \ref{complex_nerve} this also equals $(-1)^{m+1}\delta^m(\calT^m(X))(\sigma)$. This concludes the proof.

For clarity we treat the case $m=0$ separately. If $X\in C^{0}(k\calC)$ then $\calT^{1}(\partial^{0}X)(\sigma)$ is the coefficient of $g_0a_0$ in $(\partial^{0}X)(g_0)=g_0X+Xg_0=\sum_{h\in\Mor(\calC)}X^{h}g_0h-X^{h}hg_0$. If $g_0h=g_0a_0$ then $h=a_0$ and if $hg_0=g_0a_0$ then $h=a_1$. Hence $\calT^{1}(\partial^{0}X)(\sigma)=X^{a_0}-X^{a_1}$. On the other hand $X^{a_0}=\calT^{0}(X)\circ d^{0}_{1}$ and $X^{a_1}=\calT^{0}(X)\circ d^{0}_{0}$. Thus $\calT^{1}(\partial^{0}X)(\sigma)=-\delta^{0}(\calT^{m}X)(\sigma)$.
\end{proof}

\subsection{A section $\calX^{\bullet}$ for $\calT^{\bullet}$}
\label{section_X}
In this subsection we assume that $\calC$ is right deterministic and right cancellative. With these assumptions we can  describe a section $\calX^{\bullet}:C^{\bullet}(kNF^{\ad}(\calC),k)\rightarrow C^{\bullet}(k\calC)$ for the map $\calT^{\bullet}$. The $k$-linear map $T\in C^{m}(kNF^{\ad}(\calC),k)$ is determined by its values on  $NF^{\ad}(\calC)_m$.
For each
\begin{equation}
  \label{my_sigma}
\sigma
=
\left(\begin{tikzcd}
x_0 \arrow[r,"g_0"] & x_1 \arrow[r, "g_1"]& \cdots\arrow[r,"g_{m-2}"]  & x_{m-1}  \arrow[r,"g_{m-1}"] & x_m \\
x_0 \arrow[u,"a_0"]\arrow[r,"g_0"] & x_1\arrow[u,"a_1"]\arrow[r, "g_1"] & \cdots \arrow[r,"g_{m-2}"]& x_{m-1}\arrow[u,"a_{m-1}"]\arrow[r,"g_{m-1}"] & x_m \arrow[u,"a_m"]
\end{tikzcd}\right)
\in NF^{\ad}(\calC)_m,
\end{equation}
by Lemma \ref{end_factorization} (b), any sequence $g_{m-1},\dots,g_{0},a_0$ of composable morphisms in $\calC$ (like above) determines uniquely  the morphisms $a_1,\ldots, a_m$. Hence, we shall adopt the notation 
$$
T_{g_{m-1},\dots,g_{0}}^{g_{m-1}g_{m-2}\cdots g_{1} g_{0}a_{0}}:=T(\sigma)
$$
if $m\geq 1$ and $T^{a_0}:=T(\sigma)$ if $m=0$. If $m\geq 1$, we define $\calX^m:C^{m}(kNF^{\ad}(\calC),k)\rightarrow C^{m}(k\calC)$
by 
$$
\calX^m(T)(g_{m-1}\otimes \dots\otimes g_{0})
=\sum_{a_0\in\End_{\calC}(s(g_0))}T_{g_{m-1},\dots,g_{0}}^{g_{m-1}g_{m-2}\dots g_{1} g_{0}a_0}
g_{m-1}g_{m-2}\cdots g_{2} g_{1} g_{0}a_{0}.
$$
for each $g_0,\dots,g_{m-1}\in\Mor(\calC)$. If $m=0$ we let $\calX^0(T)=\sum_{a_0\in\End(\calC)}T^{a_0}a_0$. For all $m\geq 0$ we extend $\calX^m(T)$ by linearity. If the composition $g_{m-1}\circ g_{m-2}\cdots g_{2}\circ g_{1}\circ g_{0}\circ a_{0}$ does not exist, the element $T_{g_{m-1},\dots,g_{0}}^{g_{m-1}g_{m-2}\dots g_{1} g_{0}a_0}$ is not defined, however, in this cases $g_{m-1}g_{m-2}\cdots g_{2} g_{1} g_{0}a_{0}$ is zero in $k\calC$, i.e. the value of $\calX^m(T)$ on non-composable morphisms is zero.

\begin{prop}
  \label{complex_map_X}
  If $\calC$ is a finite deterministic cancellative category  then $\calX^{m+1}\circ \delta^m=(-1)^{m+1}\partial^m\circ \calX^{m}$.
\end{prop}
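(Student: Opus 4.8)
The plan is to mirror the computation in the proof of Proposition \ref{complex_map}, but in the opposite direction. Fix $T\in C^m(kNF^{\ad}(\calC),k)$ and evaluate both sides of the claimed identity on a basis tensor $g_m\otimes\cdots\otimes g_0$ of $(k\calC)^{\otimes(m+1)}$; since $\calX^{m+1}\circ\delta^m$ and $\partial^m\circ\calX^m$ are both $k$-linear it suffices to check equality there. If the composite $g_m\circ\cdots\circ g_0$ does not exist, then every product appearing below is zero in $k\calC$ and both sides vanish, so we may assume the $g_i$ are composable. First I would expand $\partial^m(\calX^m T)(g_m\otimes\cdots\otimes g_0)$ using the differential \eqref{HH_basis} together with the definition of $\calX^m$; this produces a leading term coming from left multiplication by $g_m$, a block of interior terms coming from the adjacent compositions $g_{j+1}g_j$, and a trailing term coming from right multiplication by $g_0$. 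On the other side I would expand $\calX^{m+1}(\delta^m T)(g_m\otimes\cdots\otimes g_0)=\sum_{a_0}\delta^m(T)(\tilde\sigma)\,g_m\cdots g_0a_0$, where $\tilde\sigma$ is the $(m+1)$-chain in $NF^{\ad}(\calC)$ determined, by Lemma \ref{end_factorization}(b), by the data $g_m,\dots,g_0,a_0$, and then decompose $\delta^m(T)(\tilde\sigma)$ into the face contributions $d_0,\dots,d_{m+1}$.

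The matching then proceeds term by term. The face $d_{m+1}\tilde\sigma$ (which drops the last object and its vertical $a_{m+1}$) records the coefficient $T_{g_{m-1},\dots,g_0}^{g_{m-1}\cdots g_0a_0}$, and after multiplying by $g_m\cdots g_0a_0$ and summing over $a_0$ this is exactly $g_m\cdot\calX^m(T)(g_{m-1}\otimes\cdots\otimes g_0)$, the leading Hochschild term, carrying the sign $(-1)^{m+1}$. For $1\le i\le m$ the face $d_i\tilde\sigma$ composes $g_{i-1},g_i$ into $g_i\circ g_{i-1}$ and deletes the vertical $a_i$ while keeping the leading vertical $a_0$; since the total product $g_m\cdots g_0a_0$ is unchanged by such an interior composition, these contributions agree with the interior Hochschild terms obtained by composing the adjacent pair $g_ig_{i-1}$, the simplicial sign $(-1)^i$ and the Hochschild sign $(-1)^{m-j}$ from \eqref{HH_basis} combining, after the routine reindexing $i\leftrightarrow j$, into the global factor $(-1)^{m+1}$.

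The one step that is genuinely more than bookkeeping is the trailing term, corresponding to the face $d_0\tilde\sigma$ that drops the first object and shifts the leading vertical from $a_0$ to $a_1$. Here the $\calX^{m+1}$–side produces $\sum_{a_0}T_{g_m,\dots,g_1}^{g_m\cdots g_1a_1}\,g_m\cdots g_0a_0$ with $a_1$ determined by $a_0$, whereas the trailing Hochschild term $(-1)^{m+1}\calX^m(T)(g_m\otimes\cdots\otimes g_1)\,g_0$ produces $(-1)^{m+1}\sum_{a_1}T_{g_m,\dots,g_1}^{g_m\cdots g_1a_1}\,g_m\cdots g_1a_1g_0$, with the sum ranging over $a_1\in\End_{\calC}(t(g_0))$. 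To identify these I would use the commutativity of the defining square \eqref{ad_morphism}, namely $a_1\circ g_0=g_0\circ a_0$, to rewrite $g_m\cdots g_1a_1g_0=g_m\cdots g_0a_0$, and then invoke the monoid isomorphism $\varphi_{g_0}$ of Lemma \ref{end_factorization}(c): since $a_1=\varphi_{g_0}(a_0)$ and $\varphi_{g_0}$ is a bijection between $\End_{\calC}(s(g_0))$ and $\End_{\calC}(t(g_0))$, the sum over $a_1$ may be reindexed as a sum over $a_0$, making the two expressions equal. This is precisely the point at which the full deterministic cancellative hypothesis (rather than mere cancellativity, as in Proposition \ref{complex_map}) is needed, and it is the main obstacle. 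Finally I would record the base case $m=0$ separately: a direct check gives $\calX^1(\delta^0T)(g_0)=\calX^0(T)g_0-g_0\calX^0(T)=-\partial^0(\calX^0T)(g_0)$, again using $g_0a_0=a_1g_0$ and the bijectivity of $\varphi_{g_0}$ to reindex, in agreement with the asserted sign $(-1)^{0+1}$.
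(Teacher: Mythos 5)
Your proof is correct and takes essentially the same route as the paper's own argument: both expand the two sides on a basis tensor $g_m\otimes\cdots\otimes g_0$, dispose of the non-composable case, match the simplicial faces $d^m_{m+1}$, $d^m_i$ ($1\le i\le m$) and $d^m_0$ of the chain $\tilde\sigma$ against the three blocks of the Hochschild differential \eqref{HH_basis}, and settle the only nontrivial matching (the $d_0$/trailing term) by reindexing the sum through the monoid isomorphism $\varphi_{g_0}$ of Lemma \ref{end_factorization}(c), which is exactly where the paper also invokes the deterministic hypothesis. The sign bookkeeping and the separate verification for $m=0$ likewise coincide with the paper's proof.
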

\begin{proof}
  For $m\geq 0$, we show that
  \begin{equation}
    \label{complex_map_X_eq}
  (-1)^{m+1}
  \calX^{m+1}(\delta^mT)(g_{m}\otimes \dots\otimes g_0)
  =
  \partial^m(\calX^{m}T)(g_{m}\otimes \dots\otimes g_0)
  \end{equation}
  for any $T\in C^{m}(kNF^{\ad}(\calC),k)$ and for all $g_0,\dots,g_{m}\in \Mor(\calC)$. The right-hand side is
  $$
  g_m (\calX^{m}T)(g_{m-1}\otimes \dots\otimes g_0)+\sum_{j=0}^{m-1}(-1)^{m-j}(\calX^{m}T)(\dots\otimes g_{j+1}g_{j}\otimes \dots)+(-1)^{m+1}(\calX^{m}T)(g_{m}\otimes \dots\otimes g_1)g_0.
  $$
  By definition of $\calX^{m}$, if $g_{m-1}\circ\dots\circ g_0$ exists, then $(\calX^{m}T)(g_{m-1}\otimes \dots\otimes g_0)\in\Mor_{\calC}(s(g_0),t(g_{m-1}))$, hence if $g_{m}\circ\dots\circ g_0$ does not exist, then $t(g_{m-1})\neq s(g_m)$ and thus $g_m (\calX^{m}T)(g_{m-1}\otimes \dots\otimes g_0)=0$. A similar argument shows that if $g_{m}\circ\dots\circ g_0$ does not exist then all other terms in the above sum are zero. In this case the left-hand side of \eqref{complex_map_X_eq} is also zero. Assume that $g_{m}\circ\dots\circ g_0$ exists. By definition of $\calX^{m}$ the right-hand side of \eqref{complex_map_X_eq} further equals
  $$
  g_m \sum_{a_0\in\End_{\calC}(s(g_0))}T_{g_{m-1},\dots,g_0}^{g_{m-1}\cdots g_0a_0}g_{m-1}\cdots g_0a_0
  +\sum_{j=0}^{m-1}(-1)^{m-j}
  \sum_{a_0\in\End_{\calC}(s(g_0))}T_{g_m,\dots,g_{j+1}g_{j},\dots g_0}^{g_{m}\cdots g_0a_0}g_{m}\cdots g_0a_0
  $$
  $$
  +(-1)^{m+1}\sum_{a_1\in\End_{\calC}(s(g_1))}T_{g_{m},\dots,g_1}^{g_{m}\cdots g_1a_1}g_{m}\cdots g_1a_1g_0.
  $$
  Since $\calC$ is deterministic, by Lemma \ref{end_factorization} (c), we have an isomorphism of monoids $\varphi_{g_{0}}:\End_{\calC}(s(g_0))\rightarrow \End_{\calC}(s(g_1))$ such that $\varphi_{g_0}(a_0)\circ g_0=g_0\circ a_0$ for all $a_0\in \End_{\calC}(s(g_0))$. Hence
$$
\sum_{a_1\in\End_{\calC}(s(g_1))}T_{g_{m},\dots,g_1}^{g_{m}\cdots g_1a_1}g_{m}\cdots g_1a_1g_0
=
\sum_{a_0\in\End_{\calC}(s(g_0))}T_{g_{m},\dots,g_1}^{g_{m}\cdots g_1\varphi_{g_0}(a_0)}g_{m}\cdots g_1g_0a_0
$$
which equals $\calX^{m+1}(T\circ d^{m}_{0})(g_m\otimes\dots\otimes g_0)$. We also have
$$
\sum_{a_0\in\End_{\calC}(s(g_0))}
T_{g_{m-1},\dots,g_0}^{g_{m-1}\cdots g_0a_0}g_{m}\cdots g_0a_0
=
\calX^{m+1}(T\circ d^{m}_{m+1})(g_m\otimes\dots\otimes g_0)
$$
and for $0\leq j\leq m-1$ we have
$$
\sum_{a_0\in\End_{\calC}(s(g_0))}
T_{g_m,\dots,g_{j+1}g_{j},\dots,g_0}^{g_{m}\cdots g_0a_0}g_{m}\cdots g_0a_0
=
\calX^{m+1}(T\circ d^{m}_{m-j})(g_m\otimes\dots\otimes g_0).
$$
Since
$$
\sum_{j=0}^{m-1}
(-1)^{m-j}\calX^{m+1}(T\circ d^{m}_{m-j})
=
(-1)^{m-1}
\sum_{i=1}^{m}
(-1)^{i}\calX^{m+1}(T\circ d^{m}_{i})
$$
the claim follows from the description of $\delta^{m}$ in Subsection \ref{complex_nerve}.

For clarity we treat the case $m=0$ separately. If $T\in C^{0}(kNF^{\ad}(\calC),k)$ then by definition we have $\calX^0(T)=\sum_{a_0\in\End(\calC)}T^{a_0}a_0$. Thus $$(\partial^{0}\circ \calX^0)(T)(g_0)=\sum_{a_0\in\End(\calC)}T^{a_0}(g_0a_0-a_0g_0).$$ Hence $(\partial^{0}\circ \calX^0)(T)(g_0)$ equals
$$
\sum_{a_0\in\End_{\calC}(s(g_0))}T^{a_0}g_0a_0-\sum_{a_1\in\End_{\calC}(t(g_0))}T^{a_1}a_1g_0
=\sum_{a_0\in\End_{\calC}(s(g_0))}T^{a_0}g_0a_0-\sum_{a_0\in\End_{\calC}(s(g_0))}T^{\varphi_{g_0}(a_0)}\varphi_{g_0}(a_0)g_0
$$
for the isomorphism $\varphi_{g_0}:\End_{\calC}(s(g_0))\rightarrow \End_{\calC}(t(g_0))$ given in Lemma \ref{end_factorization} (c). On the other hand, $$\calX^{1}(\delta^{0}T)(g_0)=\sum_{a_0\in\End_{\calC}(s(g_0))}(\delta^{0}T)_{g_0}^{g_0a_0}g_0a_0$$ for any $g_0\in k\calC$, where by definition $(\delta^{0}T)_{g_0}^{g_0a_0}=(\delta^{0}T)(\sigma)$ with $\sigma$ as in \eqref{my_sigma} with $m=1$. Since $(\delta^{0}T)(\sigma)=T^{a_1}-T^{a_0}$ where $T^{a_1}=T\circ d^{0}_{0}(\sigma)$ and $T^{a_0}=T\circ d^{0}_{1}(\sigma)$ it follows that $\calX^{1}(\delta^{0}T)(g_0)$ equals
$$
  \sum_{a_0\in\End_{\calC}(s(g_0))}(T^{\varphi_{g_0}(a_0)}-T^{a_0})g_0a_0
=\sum_{a_0\in\End_{\calC}(s(g_0))}T^{\varphi_{g_0}(a_0)}g_0a_0
-\sum_{a_0\in\End_{\calC}(s(g_0))}T^{a_0}g_0a_0.
$$
Since $g_0a_0=\varphi(a_0)g_0$, we obtain $\partial^{0}\circ \calX^0(T)(g_0)=-\calX^{1}(\delta^{0}T)(g_0)$.
\end{proof}

\begin{prop}
  \label{prop_X_section}
  Let $\calC$ be a right deterministic cancellative category. The map $\calX^{\bullet}$ is a $k$-linear right inverse for $\calT^{\bullet}$.
\end{prop}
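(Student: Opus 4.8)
The plan is to reduce the assertion to the degreewise identity $\calT^m\circ\calX^m=\id_{C^m(kNF^{\ad}(\calC),k)}$ for every $m\geq 0$, since this is precisely what it means for the graded $k$-linear map $\calX^{\bullet}$ to be a right inverse of $\calT^{\bullet}$ (that each is additionally a cochain map up to the common sign $(-1)^{m+1}$ is already recorded in Propositions \ref{complex_map} and \ref{complex_map_X}, so nothing further is needed for that aspect). It suffices to verify the identity on basis elements, i.e. to evaluate $\calT^m(\calX^m(T))$ on a generic simplex $\sigma\in NF^{\ad}(\calC)_m$ of the form \eqref{my_sigma} and check that it equals $T(\sigma)$.

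First I would unwind the two definitions for $m\geq 1$. By the definition of $\calT^m$, the value $\calT^m(\calX^m(T))(\sigma)$ is the coefficient of $h'=g_{m-1}g_{m-2}\cdots g_1 g_0 a_0$ in the expansion of $\calX^m(T)(g_{m-1}\otimes\cdots\otimes g_0)$ in the basis $\Mor(\calC)$. By the definition of $\calX^m$, that expansion is $\sum_{a_0'\in\End_{\calC}(s(g_0))}T_{g_{m-1},\dots,g_0}^{g_{m-1}\cdots g_0 a_0'}\,g_{m-1}\cdots g_0 a_0'$, so the coefficient I am after is $\sum_{a_0'}T_{g_{m-1},\dots,g_0}^{g_{m-1}\cdots g_0 a_0'}$, where $a_0'$ ranges over those endomorphisms of $s(g_0)$ satisfying $g_{m-1}\cdots g_0 a_0'=g_{m-1}\cdots g_0 a_0$.

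The crux is then to see that this sum collapses to a single term. Since $\sigma$ is a genuine simplex, the chain $g_{m-1}\circ\cdots\circ g_0$ is composable, and the equation $(g_{m-1}\cdots g_0)\circ a_0'=(g_{m-1}\cdots g_0)\circ a_0$ together with left cancellativity forces $a_0'=a_0$. (Here the right deterministic hypothesis is what guarantees, via Lemma \ref{end_factorization}(b), that the index datum $(g_{m-1},\dots,g_0,a_0)$ reconstructs the entire simplex $\sigma$, so that the surviving scalar $T_{g_{m-1},\dots,g_0}^{g_{m-1}\cdots g_0 a_0}$ is literally $T(\sigma)$ rather than the value of $T$ on some neighbouring simplex.) Hence $\calT^m(\calX^m(T))(\sigma)=T_{g_{m-1},\dots,g_0}^{g_{m-1}\cdots g_0 a_0}=T(\sigma)$, as wanted.

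I do not anticipate a serious obstacle; the only delicate points are the clean application of left cancellation to the composite $g_{m-1}\cdots g_0$ and the bookkeeping that $(g_{m-1},\dots,g_0,a_0)$ pins down $\sigma$ uniquely. Finally I would dispose of the degenerate case $m=0$ directly: here $\calX^0(T)=\sum_{a_0\in\End(\calC)}T^{a_0}a_0$ and $\calT^0$ merely reads off the coefficient of the relevant endomorphism, so $\calT^0(\calX^0(T))$ evaluated at the $0$-simplex determined by $a_0$ returns $T^{a_0}=T(\sigma)$ at once, because $\Mor(\calC)$ is a $k$-basis of $k\calC$. Combining the two cases yields $\calT^{\bullet}\circ\calX^{\bullet}=\id$, which is the claim.
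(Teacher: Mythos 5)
Your proposal is correct and follows essentially the same route as the paper's proof: evaluate $\calT^m(\calX^m(T))$ on a basis simplex $\sigma$, recognize it as the coefficient of $g_{m-1}\cdots g_0a_0$ in $\calX^m(T)(g_{m-1}\otimes\cdots\otimes g_0)$, and use left cancellativity to collapse the sum to the single term $T(\sigma)$, treating $m=0$ separately. Your parenthetical remark that right determinism (via Lemma \ref{end_factorization}(b)) is what makes the indexing $T_{g_{m-1},\dots,g_0}^{g_{m-1}\cdots g_0a_0}$ reconstruct $\sigma$ is exactly the bookkeeping the paper performs in setting up the notation of Subsection \ref{section_X}, so nothing is missing.
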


\begin{proof}
  We show that $$(\calT^{m}\circ\calX^{m})(T)=T$$ for all $T\in C^{m}(kNF^{\ad}(\calC),k)$ and for all $m\geq 0$. For each $\sigma$ as in \eqref{my_sigma}, $\calT^{m}(\calX^{m}(T))(\sigma)$ is the coefficient of $g_{m-1}g_{m-2}\dots g_{2}g_{1}g_0a_{0}$ in $\calX^{m}(T)(g_{m-1},\dots,g_{0})$. Since $\calC$ is left cancellative, the equation $$g_{m-1}g_{m-2}\cdots g_{1} g_{0}x=g_{m-1}g_{m-2}\cdots g_{1} g_{0}a_{0}$$ has $x=a_0$ as unique solution. If $m=0$ the above equation is $x=a_0$ and we don't need $\calC$ to be left cancellative in this case. Hence $$\calT^{m}(\calX^{m}(T))(\sigma)=T_{g_{m-1},\dots,g_{0}}^{g_{m-1}g_{m-2}\dots g_{2}g_{1}g_0a_{0}}=T(\sigma)$$ and the claim follows. 
\end{proof}

\section{Relative Hochschild cohomology}
\label{sec:relative_HH}

The notion of relative Hochschild cohomology introduced in \cite{Gerstenhaber_Schack_85} simplifies the comparison of Hochschild cohomology and simplicial cohomology done in \cite{Gerstenhaber_Schack_83}. By \cite[Theorem 1.2]{Gerstenhaber_Schack_85}, if $R$ is a separable  subalgebra \cite[Definition 4.1.7]{With_book} of $k\calC$, then the $R$-relative Hochschild cohomology of $k\calC$ is isomorphic to the Hochschild cohomology of $k\calC$. For our purpose, we choose $R$ to be the subalgebra $k\calC^{\id}$ of $k\calC$ generated by the set of all identity morphisms on each object.

\begin{lem}
  \label{relative_cochains}
  The algebra $k\calC^{\id}$ is separable.
\end{lem}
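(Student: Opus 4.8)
The plan is to exhibit an explicit separability idempotent for $k\calC^{\id}$, which is the most direct route given the very simple structure of this subalgebra. First I would unwind the definition of the category algebra to record the multiplication of identity morphisms: since $s(\id_x)=t(\id_x)=x$ for every object $x$, the product rule gives $\id_x\cdot\id_y=\id_x$ when $x=y$ and $\id_x\cdot\id_y=0$ otherwise. Thus the elements $\{\id_x\mid x\in\Ob(\calC)\}$ form a set of pairwise orthogonal idempotents whose $k$-span is exactly $k\calC^{\id}$, and, as $\Ob(\calC)$ is finite, one obtains an isomorphism of $k$-algebras $k\calC^{\id}\cong k^{|\Ob(\calC)|}$. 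In particular $k\calC^{\id}$ is commutative and its identity element is $\sum_{x\in\Ob(\calC)}\id_x$.

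With this in hand, the separability idempotent I would propose is
$$
e=\sum_{x\in\Ob(\calC)}\id_x\otimes\id_x\in k\calC^{\id}\otimes_k(k\calC^{\id})^{\op}.
$$
There are two conditions to verify. For the multiplication condition, idempotency gives $\sum_{x}\id_x\cdot\id_x=\sum_x\id_x=1$. For the centralising condition it suffices, by $k$-linearity, to test on the generators $\id_y$: using the orthogonality relation $\id_x\id_y=\delta_{xy}\id_x$, both $\id_y\cdot e=\sum_x(\id_y\id_x)\otimes\id_x$ and $e\cdot\id_y=\sum_x\id_x\otimes(\id_x\id_y)$ collapse to the single term $\id_y\otimes\id_y$, so they agree. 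Hence $e$ is a separability idempotent and $k\calC^{\id}$ is separable in the sense of \cite[Definition 4.1.7]{With_book}.

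I expect no serious obstacle here: the entire content is the observation that the identities form a complete set of orthogonal idempotents, after which separability reduces to the one-line verification above. The only points requiring care are matching the precise formulation of separability used in the reference (whether it is phrased via a separability idempotent in $k\calC^{\id}\otimes_k(k\calC^{\id})^{\op}$, via projectivity of $k\calC^{\id}$ as a module over its enveloping algebra, or via semisimplicity of $k\calC^{\id}\otimes_k K$ for every field extension $K/k$) and extracting the multiplication rule correctly from the paper's convention for the category algebra. If the reference adopts the bimodule-projectivity or base-change formulation rather than the idempotent one, I would invoke the standard equivalence between these characterisations, or alternatively observe directly that the finite product of fields $k^{|\Ob(\calC)|}$ remains a finite product of fields, hence semisimple, after any field extension.
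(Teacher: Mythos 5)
Your proposal is correct and follows the same route as the paper: both exhibit the separability idempotent $e=\sum_{x\in\Ob(\calC)}\id_x\otimes\id_x$ in $k\calC^{\id}\otimes (k\calC^{\id})^{\op}$ and verify the two defining conditions. The only difference is that you spell out the orthogonal-idempotent computation that the paper compresses into ``one checks.''
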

\begin{proof}
  The algebra $R=k\calC^{\id}$ is separable if and only if there is an idempotent $e=\sum_{i}u_i\otimes v_i$ in the enveloping algebra $R\otimes R^{\op}$ of $R$ such that $\sum_iu_iv_i=1$ and $(r\otimes 1)e=(1\otimes r)e$. One checks that $e=\sum_{x\in\Ob(\calC)}1_x\otimes 1_x$ satisfies this property.
  \end{proof}

   As in \cite[\S1]{Gerstenhaber_Schack_85} the $k\calC^{\id}$-relative Hochschild cohomology of $k\calC$ is the cohomology of the subcomplex of $k\calC^{\id}$-relative cochains $C^{\bullet}(k\calC,k\calC^{\id})\subseteq C^{\bullet}(k\calC)$. The $k\calC^{\id}$-relative $m$-cochain group $C^{m}(k\calC,k\calC^{\id})$ consists of $k$-linear functions $f$ satisfying
\begin{itemize}
\item $f(ag_1\otimes \dots\otimes g_{m})=af(g_1\otimes \dots\otimes g_{m})$,
\item $f(g_1\otimes \dots\otimes g_{m}a)=f(g_1\otimes \dots\otimes g_{m})a$,
  \item $f(g_1\otimes\dots\otimes g_ia\otimes g_{i+1}\otimes \dots \otimes g_m)=f(g_1\otimes\dots\otimes g_i\otimes ag_{i+1}\otimes \dots\otimes g_m)\quad 1\leq i\leq m-1,$
\end{itemize}
for all $g_1,\dots,g_{m}\in\Mor(\calC)$ and any $a\in k\calC^{\id}$. Notice that $C^{0}(k\calC,k\calC^{\id})=C^{0}(k\calC)=k\calC$.

\begin{prop}
  \label{characterization_strict_cochains}
  If $m\geq 1$, an element $f\in C^{m}(k\calC)$ lies in $C^{m}(k\calC,k\calC^{\id})$ if and only if for any $g_1,\dots,g_{m}\in\Mor(\calC)$ the following holds
  \begin{itemize}
  \item $f(g_1\otimes \dots\otimes g_{m})=0$ if $t(g_{i+1})\neq s(g_{i}),$ for some $1\leq i\leq m-1$ 
  
  and
  
  \item $f(g_1\otimes \dots\otimes g_{m})\in k\left[\Hom_{\calC}(s(g_{m}),t(g_{1}))\right]$, otherwise.
  \end{itemize}
\end{prop}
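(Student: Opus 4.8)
The plan is to translate the three module-theoretic closure conditions defining $C^{m}(k\calC,k\calC^{\id})$ into the two combinatorial conditions on the support of $f$, working on the basis $\Mor(\calC)$ on both sides. The key observation is that the elements $\{\id_z\}_{z\in\Ob(\calC)}$ form a set of orthogonal idempotents spanning $k\calC^{\id}$, so by $k$-linearity it suffices to verify each condition with $a=\id_z$ and with $g_1,\dots,g_m$ basis morphisms. For a basis morphism $h$ one has $\id_z\, h=h$ if $t(h)=z$ and $\id_z\, h=0$ otherwise, while $h\,\id_z=h$ if $s(h)=z$ and $h\,\id_z=0$ otherwise; that is, left (resp.\ right) multiplication by $\id_z$ is the projection onto the span of the morphisms with target (resp.\ source) equal to $z$.

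For the direction from the relative conditions to the two bullets, I would first exploit the middle condition. Given a chain with $t(g_{i+1})\neq s(g_{i})$ for some $1\le i\le m-1$, choosing $a=\id_{s(g_{i})}$ turns the left side of the middle identity into $f(g_1\otimes\cdots\otimes g_m)$ (since $g_i\,\id_{s(g_i)}=g_i$) and its right side into $f(\cdots\otimes g_i\otimes \id_{s(g_i)}g_{i+1}\otimes\cdots)=0$ (since $t(g_{i+1})\neq s(g_i)$ forces $\id_{s(g_i)}g_{i+1}=0$); this yields the first bullet. For the second bullet, assume the chain is composable and write $f(g_1\otimes\cdots\otimes g_m)=\sum_h c_h\,h$. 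The first relative condition with $a=\id_{t(g_1)}$ gives $f(g_1\otimes\cdots\otimes g_m)=\id_{t(g_1)}f(g_1\otimes\cdots\otimes g_m)$, which forces $c_h=0$ whenever $t(h)\neq t(g_1)$; symmetrically, the second condition with $a=\id_{s(g_m)}$ forces $c_h=0$ whenever $s(h)\neq s(g_m)$. Hence every $h$ in the support lies in $\Hom_{\calC}(s(g_m),t(g_1))$, which is the second bullet.

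For the converse I would assume the two bullets and verify the three closure conditions on basis elements, again with $a=\id_z$. Each of conditions one and two splits into the cases $z=t(g_1)$ (resp.\ $z=s(g_m)$) or not, and in each case both sides are seen to agree using the second bullet when the chain is composable and the first bullet (both sides vanish) when it is not. The middle condition is the one to treat with care: for fixed $i$ and $z$ the two sides equal $f(g_1\otimes\cdots\otimes g_m)$ or $0$ according to whether $z=s(g_i)$ (left side) or $z=t(g_{i+1})$ (right side). When $z=s(g_i)=t(g_{i+1})$ both sides coincide; when exactly one of the equalities $z=s(g_i)$, $z=t(g_{i+1})$ holds, composability fails at position $i$, so the first bullet makes $f(g_1\otimes\cdots\otimes g_m)=0$ and both sides again agree; when neither holds both sides are $0$.

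The proof is essentially bookkeeping, so I do not expect a genuine obstacle; the only point requiring attention is the interaction, inside the middle condition, between the composability constraint $t(g_{i+1})=s(g_i)$ and the vanishing clause of the first bullet. Keeping straight which of the two outer conditions controls the source and which controls the target of the support, and consistently using that $\id_z$ acts as the target (resp.\ source) projection, is what the argument really comes down to.
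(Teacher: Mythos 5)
Your proof is correct and follows essentially the same route as the paper: testing the relative-cochain conditions against the orthogonal idempotents $\id_z$ spanning $k\calC^{\id}$, using the outer conditions with $\id_{t(g_1)}$ and $\id_{s(g_m)}$ to pin down the support and the middle condition to force vanishing on non-composable chains (the paper plugs in $\id_{t(g_{i+1})}$ where you use $\id_{s(g_i)}$, a purely symmetric choice). The only difference is that you spell out the converse case analysis in full, which the paper dismisses with ``one checks''; your verification is correct.
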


\begin{proof}
  Let $f$ be a $k\calC^{\id}$-relative $m$-cochain. For any $g_1,\dots,g_{m}\in\Mor(\calC)$ we have
  $$
  f(g_1\otimes \dots\otimes g_{m})
  =f(\id_{t(g_1)}g_1\otimes \dots\otimes g_{m})
  =\id_{t(g_1)}f(g_1\otimes \dots\otimes g_{m}).
  $$
  Since $f(g_1\otimes \dots\otimes g_{m})=\sum_{h\in\Mor(\calC)}f_{g_1,\dots,g_{m}}^{h}h$, it follows that the non-zero terms in this sum are $f_{g_1\otimes \dots\otimes g_{m}}^{h}h$ with $t(h)=t(g_1)$. A similar argument shows that $s(h)=s(g_{m})$, hence
  $$
  f(g_1\otimes \dots\otimes g_{m})=\sum_{h\in\Hom_{\calC}(s(g_{m}),t(g_{1}))}f_{g_1,\dots,g_{m}}^{h}h.
  $$
  Let $i\in\{1,\dots,m-1\}$. Moreover 
$$
    f(g_1\otimes\dots\otimes g_i\otimes g_{i+1}\otimes \dots \otimes g_m)
    =f(g_1\otimes\dots\otimes g_i\otimes \id_{t(g_{i+1})}g_{i+1}\otimes \dots \otimes g_m)$$
    
    $$=f(g_1\otimes\dots\otimes g_i\id_{t(g_{i+1})}\otimes g_{i+1}\otimes \dots \otimes g_m).
$$
    Thus, if $s(g_i)\neq t(g_{i+1})$ then $g_i\id_{t(g_{i+1})}=0$, in which case $f(g_1\otimes\dots\otimes g_i\otimes g_{i+1}\dots \otimes g_m)=0$. Hence a $k\calC^{\id}$-relative $m$-cochain has the indicated properties. Conversely, if $f$ has the indicated properties, one checks that it is $k\calC^{\id}$-relative using the fact that $k\calC^{\id}$ is generated by $\{\id_{x}:x\in\Ob(\calC)\}$.
\end{proof}

\begin{cor}
  \label{deterministic_cochains}
  Let $\calC$ be a left cancellative rr-transitive category. Then $f\in C^m(k\calC,k\calC^{\id})$ if and only if
  $$
  f(g_{m-1}\otimes \dots\otimes g_{0})=\sum_{a\in\End_{\calC}(s(g_0))}f_{g_{m-1}, \dots, g_{0}}^{g_{m-1}g_{m-2}\dots g_{1}g_{0}a}g_{m-1}g_{m-2}\dots g_{1}g_{0}a.
  $$
  for any $g_0,\dots,g_{m-1}\in\Mor(\calC)$.
\end{cor}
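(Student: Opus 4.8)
The plan is to derive the formula directly from the characterization of relative cochains in Proposition \ref{characterization_strict_cochains} together with the description of $\Hom$-sets under rr-transitivity in Proposition \ref{prop2}. Write $g=g_{m-1}\circ\cdots\circ g_0$ whenever the tuple $(g_{m-1},\dots,g_0)$ is composable, so that $s(g)=s(g_0)$ and $t(g)=t(g_{m-1})$. The key observation is that, since $\calC$ is left cancellative and rr-transitive, Proposition \ref{prop2} gives $\Hom_{\calC}(s(g_0),t(g_{m-1}))=g\circ\End_{\calC}(s(g_0))$ and, more precisely, that the map $a\mapsto g\circ a$ is a bijection $\End_{\calC}(s(g_0))\to\Hom_{\calC}(s(g_0),t(g_{m-1}))$: existence of $a$ with $g\circ a=h$ is rr-transitivity, while its uniqueness is left cancellation. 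Thus, in any expansion of a relative cochain value over $\Hom_{\calC}(s(g_0),t(g_{m-1}))$, the running morphism $h$ may be replaced, without repetition or omission, by $g_{m-1}\cdots g_0 a$ with $a$ ranging over $\End_{\calC}(s(g_0))$.

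For the forward implication I would take $f\in C^m(k\calC,k\calC^{\id})$ and apply Proposition \ref{characterization_strict_cochains}. On a non-composable tuple the value of $f$ is zero; on a composable tuple it lies in $k[\Hom_{\calC}(s(g_0),t(g_{m-1}))]$, so
$$
f(g_{m-1}\otimes\cdots\otimes g_0)=\sum_{h\in\Hom_{\calC}(s(g_0),t(g_{m-1}))}f^{h}_{g_{m-1},\dots,g_0}\,h.
$$
Re-indexing this sum along the bijection above, i.e.\ writing $h=g_{m-1}\cdots g_0 a$, produces exactly the stated formula; left cancellation guarantees that distinct $a$ give distinct $h$, so no coefficient is counted twice.

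For the converse I would start from an $f$ of the stated form and check the two conditions of Proposition \ref{characterization_strict_cochains}. If $(g_{m-1},\dots,g_0)$ is non-composable then some adjacent pair fails to compose, hence every product $g_{m-1}\cdots g_0 a$ is zero in $k\calC$ and the right-hand side vanishes; this yields the first bullet. If the tuple is composable then each $g_{m-1}\cdots g_0 a=g\circ a$ lies in $\Hom_{\calC}(s(g_0),t(g_{m-1}))$ because $a\in\End_{\calC}(s(g_0))$, so $f(g_{m-1}\otimes\cdots\otimes g_0)\in k[\Hom_{\calC}(s(g_0),t(g_{m-1}))]$, which is the second bullet. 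Hence $f$ is a $k\calC^{\id}$-relative cochain.

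The only point requiring real care is the bijectivity of $a\mapsto g\circ a$, and matching the index conventions: here the arguments are listed as $g_{m-1}\otimes\cdots\otimes g_0$ and composability of the tuple means $t(g_j)=s(g_{j+1})$ for $0\le j\le m-2$, which is precisely when $g$ is a single morphism and Proposition \ref{prop2} applies. Everything else is a routine re-indexing of the coefficient matrix $(f^{h}_{g_{m-1},\dots,g_0})$.
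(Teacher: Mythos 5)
Your proposal is correct and follows essentially the same route as the paper: expand $f$ on composable tuples via Proposition \ref{characterization_strict_cochains}, then re-index the sum over $\Hom_{\calC}(s(g_0),t(g_{m-1}))$ using the bijection $a\mapsto g_{m-1}\cdots g_0\circ a$ supplied by Proposition \ref{prop2} (surjectivity from rr-transitivity, injectivity from left cancellation). The paper's proof is terser and leaves the converse and the vanishing on non-composable tuples implicit, whereas you spell both out, but the underlying argument is identical.
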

\begin{proof}
  By Proposition \ref{characterization_strict_cochains}, for any $f\in C^{m}(k\calC,k\calC^{\id})$ and any $g_0,\dots,g_{m-1}\in\Mor(\calC)$ we have
  $$
  f(g_{m-1}\otimes \dots\otimes g_{0})=\sum_{h\in\Hom_{\calC}(s(g_0),t(g_{m-1}))}f_{g_{m-1},\dots,g_{0}}^{h}h
  $$
  where the sum is zero if $t(g_i)\neq s(g_{i+1}),$ for some $0\leq i\leq m-2$. Since $\calC$ is rr-transitive and left cancellative, the claim follows by Proposition \ref{prop2} with $g:=g_{m-1}\circ g_{m-2}\circ \dots \circ g_{1}\circ g_{0}$ and $f:=h$.
\end{proof}

\begin{proof}[Proof of Theorem \ref{HHH_epi}]
 For a small category $\calC$ we have (see \cite[\S5]{Webb_representations_cohomology})
  $$
  \Hh^{\bullet}(F^{\ad}(\calC),k)\cong\Hh^{\bullet}(BF^{\ad}(\calC))\cong \Hh^{\bullet}(C^{\bullet}(kNF^{\ad}(\calC),k),\delta^{\bullet}).
  $$
If we modify the differential by a non-zero constant, $\alpha^{m}=(-1)^{m+1}\delta^m$, for any non-negative integer $m$, the cohomology doesn't change, $\Hh^{\bullet}(F^{\ad}(\calC),k)\cong\Hh^{\bullet}(C^{\bullet}(kNF^{\ad}(\calC),k),\alpha^{\bullet})$, but now $\calT^{\bullet}$ becomes a cochain map if $\calC$ is cancellative (by Proposition \ref{complex_map}) and $\calX^{\bullet}$ become a cochain map if $\calC$ is also deterministic (by Proposition \ref{complex_map_X}). With Proposition \ref{prop_X_section}, the map $\calT^{\bullet}$ is a surjective cochain map if $\calC$ is deterministic and cancellative, hence it induces a surjective homomorphism in cohomology.

Since $k\calC^{\id}$ is a separable subalgebra of $k\calC$ (by Lemma \ref{relative_cochains}), as in \cite[\S1]{Gerstenhaber_Schack_85},  $\HH^{\bullet}(k\calC)$ is isomorphic to the $k\calC^{\id}$-relative Hochschild cohomology. By \cite[Theorem 1.2]{Gerstenhaber_Schack_85} the inclusion of complexes $C^{\bullet}(k\calC,k\calC^{\id})\rightarrow C^{\bullet}(k\calC)$ induces an isomorphism of cohomologies.  That is, we can replace the cohomology of the complex $C^{\bullet}(k\calC)$ by  the cohomology of its  subcomplex $C^{\bullet}(k\calC,k\calC^{\id})$, consisting of $k\calC^{\id}$-relative cochains. 

Assume next that $\calC$ is rr-transitive. Note that by definition of $\calX^{\bullet}$ and Corollary \ref{deterministic_cochains} we have $\im(\calX^{\bullet})\subset C^{\bullet}(k\calC,k\calC^{\id})$. As in the proof of Proposition \ref{prop_X_section} one checks that $\calX^{\bullet}$ is a $k$-linear right inverse of the restriction $\check \calT^{\bullet}$ of $\calT^{\bullet}$ to  $C^{\bullet}(k\calC,k\calC^{\id})$. 

Moreover, for $X\in  C^m(k\calC,k\calC^{\id})$ and $g_0,\ldots,g_{m-1}\in\Mor(\calC)$ we obtain
\begin{align*}
\calX^m(\check \calT^{m}(X))(g_{m-1}\otimes \ldots\otimes g_{0})&=\sum_{a\in\End_{\calC}(s(g_0))}(\calT^{m}(X))_{g_{m-1},\ldots,g_0}^{g_{m-1}\ldots g_0a}\ g_{m-1}\ldots g_0a\\
&=\sum_{a\in\End_{\calC}(s(g_0))}X_{g_{m-1}\ldots,g_0}^{g_{m-1}\ldots g_0a}\ g_{m-1}\ldots g_0a\\&=
X(g_{m-1}\otimes \ldots\otimes g_0),
\end{align*}
where the last equality is true by Corollary \ref{deterministic_cochains}. Hence $\calX^{\bullet}$ is also a left inverse for $\check\calT^{\bullet}$.
\end{proof}

\section{Derivations on $k\calC$ and characters on $F^{\ad}(\calC)$}
\label{sec:ders_chars}

\subsection{Graded derivations}\label{subsec:gradder}

Any category $\calC$ gives rise to a semigroup with zero which we denote by $S$. The set $S$ is 
$$
S:=\{(x_1,x_2)\in\Ob(\calC)\times\Ob(\calC)| \Hom_{\calC}(x_1,x_2)\neq\emptyset\}\cup\{(0,0)\}
$$
and the operation in $S$ is 
$$(x_1,x_2)(x_3,x_4)=\left\{
\begin{array}{ll}
  (x_1,x_4) &\text{ if $x_2=x_3$}\\
  0, &\text{ if $x_2\neq x_3$}
  \end{array}
\right.$$ 
The category algebra $k\calC$ is graded by $S$ with $k\calC=\bigoplus_{(x,y)\in S} R_{(x_1,x_2)}$ where $R_{(x_1,x_2)}=k[\Hom_{\calC}(x_1,x_2) ]$ if $(x_1,x_2)\in S\setminus \{(0,0)\}$ and $R_{(0,0)}=0$.

A linear endomorphism $\phi\in\End_k(k\calC)$ is called $S$-\emph{graded} if $\phi$ preserves the grading by $S$, i.e. $\phi(R_{(x_1,x_2)})\subseteq R_{(x_1,x_2)}$ for any $(x_1,x_2)\in S$.
 We denote by $\End^{\gr}(k\calC)$ the $k$-subspace of all $S$-graded endomorphisms and by $\Der^{\gr}(k\calC)$ the $k$-subspace of all $S$-graded derivations of $k\calC$. Since $C^{1}(k\calC)=\End_k(k\calC)$, it follows from Proposition \ref{characterization_strict_cochains} that $\End^{\gr}(k\calC)=C^{1}(k\calC,k\calC^{\id})$, i.e. $S$-graded endomorphisms are $k\calC^{\id}$-relative $1$-cochain maps.

\subsection{Characters on $F^{\ad}(\calC)$} One can extend the notion of \emph{character}, used in \cite[\S3]{Arutyunov_Mishchenko} for the group case, to any finite category $\calD$ as follows. Denote by $k^{\Mor(\calD)}$ the $k$-vector space of all set maps from $\Mor(\calD)$ to $k$. A set map $T\in k^{\Mor(\calD)}$ is called a \emph{character} if
  $$
  T(\eta\circ\zeta)=T(\eta)+T(\zeta)
  $$
  for all morphisms $\zeta,\eta\in\Mor(\calD)$ such that $s(\eta)=t(\zeta)$. We denote the $k$-subspace in $k^{\Mor(\calD)}$ consisting of all characters on $\calD$ by $\Char(\calD)$.

  Here we are interested in the case where $\calD=F^{\ad}(\calC)$ and we identify  $C^{1}(kNF^{\ad}(\calC),k)$ with $k^{\Mor(F^{\ad}(\calC))}$. Recall that for two  morphisms $\zeta,\eta$ in $F^{\ad}(\calC)$ such that $s(\eta)=t(\zeta)$ we have:
\begin{equation}
  \label{zeta_eta}
  \zeta=
  \left(\begin{tikzcd}
    x_1 \arrow[r,"g"] & x_2 \\
    x_1 \arrow[u,"a"]\arrow[r,"g"] & x_2\arrow[u,"b"]
  \end{tikzcd}\right)
  \quad\eta=
  \left(\begin{tikzcd}
    x_2 \arrow[r,"f"] & x_3 \\
    x_2 \arrow[u,"b"]\arrow[r,"f"] & x_3\arrow[u,"c"]
  \end{tikzcd}\right)
  \quad\eta\circ\zeta=
  \left(\begin{tikzcd}
    x_1 \arrow[r,"f\circ g"] & x_3 \\
    x_1 \arrow[u,"a"]\arrow[r,"f\circ g"] & x_3\arrow[u,"c"]
  \end{tikzcd}\right).
\end{equation}
In Section \ref{Mishchenko_general} we defined the map $\calT^1:\End_k(k\calC)\rightarrow k^{\Mor(F^{\ad}(\calC))}$. For any $X\in\End_k(k\calC)$ we have a unique decomposition $X(g)=\sum_{h\in\Mor(\calC)}X^{h}_{g}h$ for all $g\in \Mor(\calC)$ and $\calT^{1}(X)$ is defined by $\calT^{1}(X)(\sigma)=X^{g\circ a}_g$ for all $1$-simplices $\sigma$ as in \eqref{ad_morphism}. If  $\calC$ is right deterministic and right cancellative we defined $\calX^1:k^{\Mor(F^{\ad}(\calC))}\rightarrow \End_k(k\calC)$ by
  $$
  T\mapsto\calX^1(T),\quad \calX^1(T)(g)=\sum_{a\in\End_{\calC}(s(g))}T^{g\circ a}_{g}g\circ a
  $$
  for all $g\in k\calC$, where $T^{g\circ a}_{g}$ is the value $T(\sigma)$ with $\sigma$ as in \eqref{ad_morphism}.

  \begin{proof}[Proof of Theorem \ref{HH1H1}]
    We adopt the notations $\calT:=\calT^1|_{\Der^{\gr}(k\calC)}$ for the restriction of $\calT^1$ to $\Der^{\gr}(k\calC)$ and $\calX:=\calX^1|_{\Char(F^{\ad}(\calC))}$ for the restriction of $\calX^1$ to $\Char(F^{\ad}(\calC))$.
    
    First we show that $\im(\calT)\subseteq\Char(F^{\ad}(\calC))$, i.e. we show that for $X\in\Der^{\gr}(k\calC)$ and any composable morphisms $\zeta,\eta\in\Mor(F^{\ad}(\calC))$ as in \eqref{zeta_eta} we have
    $$
    \calT(X)(\eta\circ\zeta)=\calT(X)(\zeta)+\calT(X)(\eta).
    $$
    By definition of $\calT$, of $\zeta$ and of $\eta$, this is equivalent to
    $$
    X^{f\circ g\circ a}_{f\circ g}=X^{g\circ a}_{g} +X^{f\circ b}_{f}.
    $$
    Since $\calC$ is rr-transitive and left cancellative, by Corollary \ref{deterministic_cochains}, for any $h\in\Mor(\calC)$ we have
    $$
    X(h)=\sum_{d\in\End_{\calC}(s(h))}X_{h}^{h\circ d}h\circ d.
    $$
    Since $X$ is a derivation we have $X(fg)=X(f)g+fX(g)$ which is equivalent to
    $$
    \sum_{d\in\End_{\calC}(s(g))}X_{f\circ g}^{f\circ g\circ d}f\circ g\circ d
    =
    \sum_{d'\in\End_{\calC}(s(f))}X_{f}^{f\circ d'}f\circ d'\circ g
    +
    \sum_{d\in\End_{\calC}(s(g))}X_{g}^{g\circ d}f\circ g\circ d
    .
    $$
    Since $\calC$ is deterministic and cancellative, by Lemma \ref{end_factorization} (c), there is an isomorphism of monoids $\varphi_g^{-1}:\End_{\calC}(t(g))\rightarrow\End_{\calC}(s(g))$ such that $d'\circ g=g\circ \varphi_{g}^{-1}(d')$. Hence,
        $$
    \sum_{d\in\End_{\calC}(s(g))}X_{f\circ g}^{f\circ g\circ d}f\circ g\circ d
    =
    \sum_{d'\in\End_{\calC}(s(f))}
    X_{f}^{f\circ d'} f\circ g\circ\varphi_g^{-1}(d')
    +\sum_{d\in\End_{\calC}(s(g))}X_{g}^{g\circ d}f\circ g\circ d.
    $$
    Since $\calC$ is cancellative, we have $f\circ g\circ d=f\circ g\circ a$ if and only if $d=a$ and,  $\varphi_g^{-1}(d')=a$ if and only if $d'=b$. Hence the coefficient of $f\circ g\circ a$ is $$X_{f\circ g}^{f\circ g\circ a}=X_{f}^{f\circ b}+X_{g}^{g\circ a}.$$ This shows that $\calT$ maps graded derivations to characters.
    
By definition of $\calX$, it is clear that $\im(\calX)\subseteq \End^{\gr}(k\calC)$. Thus, it is enough to show that $\im(\calX)\subseteq\Der(k\calC)$, i.e. we show that for $T\in\Char(F^{\ad}(\calC))$ and any $f,g\in\Mor(\calC)$ we have
  \begin{equation}
    \label{deri}
    \calX(T)(fg)=\calX(T)(f)g+f\calX(T)(g).
    \end{equation}
    By definition of $\calX$, for any $h\in\Mor(\calC)$ we have $$\calX(T)(h)\subseteq k[\End_k(s(h),t(h))]$$ hence, if $s(f)\neq t(g)$, both sides of \eqref{deri} are zero. If $s(f)\neq t(g)$, \eqref{deri} is equivalent to
    $$
    \sum_{a\in\End_{\calC}(s(f\circ g))}T^{f\circ g\circ a}_{f\circ g}f\circ g\circ a
    =
    \sum_{b\in\End_{\calC}(s(f))}T^{f\circ b}_{f}f\circ b\circ g
    +
    \sum_{a\in\End_{\calC}(s(g))}T^{g\circ a}_{g}f\circ g\circ a.
    $$
    Since $\calC$ is deterministic and cancellative, by Lemma \ref{end_factorization} (c), there is an isomorphism of monoids $\varphi_{g}:\End_{\calC}(s(g))\rightarrow \End_{\calC}(s(f))$ satisfying $$b\circ g=\varphi_{g}(a)\circ g=g\circ a.$$ Hence, \eqref{deri} is equivalent to
    $$
    \sum_{a\in\End_{\calC}(s(g))}T^{f\circ g\circ a}_{f\circ g}f\circ g\circ a
    =
    \sum_{a\in\End_{\calC}(s(g))}
    \left(T^{f\circ \varphi_{g}(a)}_{f}
    +
    T^{g\circ a}_{g}\right)
    f\circ g\circ a,
    $$
   that is, \eqref{deri} is equivalent to
\begin{equation}\label{eq_char}
  T^{f\circ g\circ a}_{f\circ g}
    =
    T^{f\circ \varphi_{g}(a)}_{f}
    +
    T^{g\circ a}_{g}
\end{equation}
  for all $a\in\End_{\calC}(s(g))$. For any $a\in\End_{\calC}(s(g))$ we have
$$T_g^{g\circ a}=T(\alpha),\quad T_{f}^{f\circ \varphi_g(a)}=T(\beta), \quad T_{f\circ g}^{f\circ g \circ \varphi_{f\circ g}(a)}=T(\gamma),$$ 
where $$\alpha=\left(\begin{tikzcd}
    s(g) \arrow[r,"g"] & t(g) \\
    s(g) \arrow[u,"a"]\arrow[r,"g"] & t(g)\arrow[u,"\varphi_g(a)"]
  \end{tikzcd}\right)
  \quad\beta=
  \left(\begin{tikzcd}
    t(g) \arrow[r,"f"] & t(f) \\
   t(g) \arrow[u,"\varphi_g(a)"]\arrow[r,"f"] & t(f)\arrow[u,"\varphi_f(\varphi_g(a))"]
  \end{tikzcd}\right)
  \quad \gamma=
  \left(\begin{tikzcd}
    s(g) \arrow[r,"f\circ g"] & t(f) \\
    s(g) \arrow[u,"a"]\arrow[r,"f\circ g"] & t(f)\arrow[u,"\varphi_{f\circ g}(a)"]
  \end{tikzcd}\right).$$  
Applying Lemma \ref{end_factorization} (c) we obtain $\gamma=\beta\circ  \alpha$. Thus, equation \eqref{eq_char} is true since $T$ is a character. 
In other words, we have $k$-linear maps
    $$
 \calT:\Der^{\gr}(k\calC)\rightarrow \Char(F^{\ad}(\calC))
\quad
\calX: \Char(F^{\ad}(\calC))\rightarrow \Der^{\gr}(k\calC).
$$
The fact that $\calT$ is surjective with section $\calX$ can be verified using  the same arguments as in Proposition \ref{prop_X_section}.
In order to see that this map is an isomorphism we verify that $\calX$ is also a left inverse to $\calT$. We have
$$
\calX(\calT(X))(g)
=
\sum_{a\in\End_{\calC}(s(g))}(\calT(X))^{g\circ a}_{g}g\circ a
=
\sum_{a\in\End_{\calC}(s(g))}X_{g}^{g\circ a}g\circ a
=X(g)
$$
where the first equality follows from the definition of $\calX$ and the last equality is true by Corollary \ref{deterministic_cochains}.
  \end{proof}
  
\textbf{Acknowledgments.} This work was supported by a grant of the Ministry of Research, Innovation and Digitalization, CNCS/CCCDI–UEFISCDI, project number PN-III-P1-1.1-TE-2019-0136, within PNCDI III.

\end{document}